\newtheorem{theorem}{Theorem}[section]
\newtheorem{prop}[theorem]{Proposition}
\newtheorem{cor}[theorem]{Corollary}
\newtheorem{claim}[theorem]{Claim}
\theoremstyle{definition}
\newtheorem{exam}[theorem]{Example}
\newtheorem{rmk}[theorem]{Remark}
\newlist{Case}{enumerate}{2}
\setlist[Case, 1]{%
    label           =   {\bfseries Case \arabic*.},
    labelindent=1em ,labelwidth=1.3cm, labelsep*=1em, leftmargin =!
}
\setlist[Case, 2]{%
    label           =   {\bfseries Subcase \arabic{Casei}.\arabic*.},
    labelindent=-1em ,labelwidth=1.3cm, labelsep*=1em, leftmargin =!
}
\newenvironment{poc}{\begin{proof}[Proof of claim]}{\end{proof}}
\title{Exact values and improved bounds on $k$-neighborly families of boxes}
\author{
Xinbu Cheng\thanks{Laboratory of Mathematics and Complex Systems, Ministry of Education, School of Mathematical Sciences, Beijing Normal University, Beijing, China. Emails: chengxinbu2006@sina.com and W15903853878@163.com.}
\and
Meiqin Wang\footnotemark[1]
\and
Zixiang Xu\thanks{Extremal Combinatorics and Probability Group (ECOPRO), Institute for Basic Science (IBS), Daejeon, South Korea. Email: zixiangxu@ibs.re.kr. Supported by IBS-R029-C4.}
\and
Chi Hoi Yip\thanks{Department of Mathematics, University of British Columbia, Vancouver, Canada. Email: kyleyip@math.ubc.ca.}\\
}
\date{}
\begin{document}

\maketitle
\begin{abstract}
  A finite family $\mathcal{F}$ of $d$-dimensional convex polytopes is called $k$-neighborly if $d-k\le\textup{dim}(C\cap C')\le d-1$ for any two distinct members $C,C'\in\mathcal{F}$. In 1997, Alon initiated the study of the general function $n(k,d)$, which is defined to be the maximum size of $k$-neighborly families of standard boxes in $\mathbb{R}^{d}$. Based on a weighted count of vectors in $\{0,1\}^{d}$, we improve a recent upper bound on $n(k,d)$ by Alon, Grytczuk, Kisielewicz, and Przes\l awski for any positive integers $d$ and $k$ with $d\ge k+2$. In particular, when $d$ is sufficiently large and $k\ge 0.123d$, our upper bound on $n(k,d)$ improves the bound $\sum_{i=1}^{k}2^{i-1}\binom{d}{i}+1$ shown by Huang and Sudakov exponentially. 
  
  Furthermore, we determine that $n(2,4)=9$, $n(3,5)=18$, $n(3,6)=27$, $n(4,6)=37$, $n(5,7)=74$, and $n(6,8)=150$. The stability result of Kleitman's isodiametric inequality plays an important role in the proofs.
\end{abstract}

\section{Introduction}\label{sec:Introduction}
A standard box in $\mathbb{R}^{d}$ is a set of the form $K=K_{1}\times \cdots \times K_{d}$, where $K_{i}\subseteq \mathbb{R}$ is a closed interval for each $i\in [d]$. A finite family $\mathcal{C}$ of $d$-dimensional standard boxes is called $k$-neighborly if the dimension of the intersection of any two boxes is in $\{d-k,d-k+1,\ldots,d-1\}$. For positive integers $1\le k\le d$, the function $n(k,d)$ is defined to be the maximum number of elements in a $k$-neighborly family of $d$-dimensional standard boxes. The $1$-neighborly families of standard boxes were widely studied several decades ago. For example, Zaks~\cite{1987Zaks} showed the maximum size of a $1$-neighborly family is closely related to the famous Graham-Pollak theorem~\cite{1972GrahamPollak}, which gives that $n(1,d)=d+1$. There have been several other beautiful proofs of Graham-Pollak theorem~\cite{1982JGT,2008JCTA, 2013DMCountingProof}. In 1997, Alon~\cite{1997Alon} generalized the problem to $k$-neighborly families and proved the equivalence between the maximum size of $k$-neighborly families of standard boxes and a variant of bipartite coverings. More precisely, he showed that for $1\le k\le d$, $n(k,d)$ is precisely the maximum number of vertices of a complete graph that admits a bipartite covering of order $k$ and size $d$.

Based on an elegant polynomial method and an explicit construction, Alon~\cite{1997Alon} proved the following theoretical bounds on $n(k,d)$.

\begin{theorem}[\cite{1997Alon}]\label{thm:Alon1997}
    For $1\leqslant k\leqslant d$, we have
   \begin{equation*}
     \prod\limits_{i=0}^{k-1}\bigg(\left\lfloor\frac{d+i}{k}\right\rfloor+1\bigg) \le n(k,d)\leqslant \sum_{i=0}^{k}2^{i}\binom{d}{i}.
   \end{equation*}
\end{theorem}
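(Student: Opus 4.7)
My plan is to lift the Graham--Pollak bound $n(1, m) = m + 1$ via a product construction. Setting $d_i := \lfloor (d+i)/k \rfloor$ for $i = 0, 1, \ldots, k-1$, a short calculation gives $\sum_{i=0}^{k-1} d_i = d$, so one can partition the $d$ coordinates into $k$ blocks of these sizes. In the $i$-th block I invoke Graham--Pollak to obtain a $1$-neighborly family $\mathcal{F}_i$ of $d_i + 1$ standard boxes in $\mathbb{R}^{d_i}$, and let $\mathcal{F}$ be the family of all product boxes $C_0 \times \cdots \times C_{k-1}$ with $C_i \in \mathcal{F}_i$, of total size $\prod_{i=0}^{k-1}(d_i + 1)$. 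For any two distinct product boxes $C, C' \in \mathcal{F}$, the intersection on the $i$-th block has dimension $d_i$ when $C_i = C_i'$ and dimension exactly $d_i - 1$ otherwise (since $\mathcal{F}_i$ is $1$-neighborly, no two of its boxes are disjoint); as at least one and at most $k$ blocks differ, $\dim(C \cap C') \in \{d-k, \ldots, d-1\}$, so $\mathcal{F}$ is $k$-neighborly.

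\textbf{Upper bound.} Here I would deploy the polynomial method. Given a $k$-neighborly family $\mathcal{F}$, the goal is to assign to each box $C = \prod_{j=1}^d [a_j^C, b_j^C]$ a polynomial $P_C$ living in an ambient space of dimension $\sum_{i=0}^k 2^i \binom{d}{i}$, and show that the $\{P_C\}_{C \in \mathcal{F}}$ are linearly independent. The target dimension is exactly the number of pairs $(S, \phi)$ with $S \subseteq [d]$, $|S| \le k$, and $\phi : S \to \{L, R\}$, which suggests a natural monomial basis indexed by such pairs: $S$ marks the ``degenerate'' coordinates between two boxes and $\phi$ records whether a shared endpoint is a left or right endpoint of $C$. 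The $k$-neighborly hypothesis is precisely what makes this encoding work: for any two distinct $C, C' \in \mathcal{F}$, along each coordinate their intervals either overlap in positive length or meet at a single common endpoint, and the degenerate single-endpoint case occurs in at most $k$ coordinates.

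\textbf{Main obstacle.} The crux of the argument is constructing $P_C$ as a product of local ``endpoint-mismatch'' factors (one per coordinate) so that the resulting polynomial (i) lies in the degree-$\le k$ subspace spanned by the $(S, \phi)$ monomials, and (ii) satisfies a triangularity condition such as $P_C(\mathrm{enc}(C')) = 0$ for $C' \ne C$ but $P_C(\mathrm{enc}(C)) \ne 0$ under a suitable encoding of boxes. Once both properties are in hand, linear independence of $\{P_C\}$ follows by standard linear algebra and yields $|\mathcal{F}| \le \sum_{i=0}^k 2^i \binom{d}{i}$. Engineering the local factors to deliver (i) and (ii) simultaneously, while honestly using the neighborly hypothesis to cap the degree at $k$, is the delicate step I expect to require the most care, since every coordinate contributes either a multiplicative constant or a ``pick one of two endpoints'' factor, and the bookkeeping must be just tight enough to fit inside the claimed dimension.
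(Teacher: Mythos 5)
The paper does not actually prove this statement --- it is quoted from Alon's 1997 paper --- so I am measuring your proposal against Alon's original argument. Your lower bound is complete and is precisely Alon's construction: the identity $\sum_{i=0}^{k-1}\lfloor (d+i)/k\rfloor = d$ holds (write $d=qk+r$ and note $\lfloor (r+i)/k\rfloor=1$ for exactly $r$ values of $i\in\{0,\dots,k-1\}$), the blocks are nonempty because $d\ge k$, and the product of $1$-neighborly families is $k$-neighborly for exactly the reason you give.

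The upper bound, however, contains a genuine gap, and you have located it yourself: the polynomials $P_C$ are never constructed. Your target dimension $\sum_{i=0}^{k}2^{i}\binom{d}{i}$ and the monomial basis indexed by pairs $(S,\phi)$ with $|S|\le k$, $\phi:S\to\{L,R\}$ are the right ones, but the structural guess that $P_C$ should be a product of one local factor \emph{per coordinate} cannot work as stated: a product of $d$ nonconstant local factors has degree growing with $d$, not $k$, and there is no way to force all but $k$ of the factors to be constants uniformly over all $C'$. The construction that actually works (and underlies Proposition~\ref{prop:Equivalence} of this paper) first encodes each box as a vector $\boldsymbol{u}\in\{0,1,*\}^{d}$ so that $k$-neighborliness becomes the condition that the generalized Hamming distance $D(\boldsymbol{u},\boldsymbol{u}')$ (the number of non-joker coordinates where they disagree) lies in $\{1,\ldots,k\}$ for distinct members. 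One then takes the single global polynomial $P_{\boldsymbol{u}}(\boldsymbol{v})=\prod_{t=1}^{k}\bigl(D(\boldsymbol{u},\boldsymbol{v})-t\bigr)$: a product of $k$ factors, each \emph{linear} in the $2d$ variables $y_{j,\epsilon}$, where $y_{j,\epsilon}(\boldsymbol{v})$ is $1$ if $v_j=\epsilon$ and $0$ otherwise. This vanishes at every other member, equals $(-1)^k k!\neq 0$ at $\boldsymbol{u}$ itself (your triangularity condition), and, after reducing modulo the relations $y_{j,\epsilon}^2=y_{j,\epsilon}$ and $y_{j,0}y_{j,1}=0$ valid on all encoding vectors, lies in the span of the monomials $\prod_{j\in S}y_{j,\phi(j)}$ with $|S|\le k$, giving the claimed dimension. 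So the missing idea is to multiply $k$ translated distance functions rather than $d$ per-coordinate factors; without it the degree bound, and hence the theorem, does not follow.
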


About one decade ago, Huang and Sudakov~\cite{huang2012counterexample} improved the above upper bound via a clever rank argument and a trick of Peck~\cite{1984Peck}.
\begin{theorem}[\cite{huang2012counterexample}]\label{thm:Huang2012}
    For $1\leqslant k\leqslant d$, we have
   \begin{equation*}
       n(k,d)\leqslant 1+\sum_{i=1}^{k}2^{i-1}\binom{d}{i}.
   \end{equation*}
\end{theorem}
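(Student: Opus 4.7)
The plan is to follow the polynomial-method framework that underlies Alon's bound in Theorem~\ref{thm:Alon1997}, and then to apply Peck's symmetrization trick to halve the contribution of every non-constant degree in the resulting monomial count.

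I would start by invoking Alon's equivalence: $n(k,d)$ is the maximum $N$ such that the complete graph $K_N$ admits a family of $d$ complete bipartite subgraphs $(A_i,B_i)_{i=1}^{d}$ whose union covers every edge at least once and at most $k$ times. Encode each vertex $a\in[N]$ by a type vector $\tau_a\in\{+,-,0\}^d$, where $\tau_a(i)$ records whether $a\in A_i$, $a\in B_i$, or neither; then distinct $a,b$ satisfy $1\le |D(a,b)|\le k$ for $D(a,b):=\{i:\{\tau_a(i),\tau_b(i)\}=\{+,-\}\}$.

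For the rank argument, I would work in the multilinear polynomial space $V$ in variables $x_1,y_1,\dots,x_d,y_d$ of total degree at most $k$, subject to the rule that no monomial contains both $x_i$ and $y_i$; this space has $\dim V=\sum_{i=0}^{k}2^i\binom{d}{i}$. To each vertex $a$ associate the degree-$\le k$ truncation $\tilde{P}_a$ of $\prod_{j:\tau_a(j)\neq 0}(1-z_j^a)$, where $z_j^a$ equals $y_j$ if $\tau_a(j)=+$ and $x_j$ if $\tau_a(j)=-$. A short inclusion-exclusion identity shows that if $v_b\in\{0,1\}^{2d}$ is the natural point encoding $\tau_b$, then $\tilde{P}_a(v_b)=\sum_{T\subseteq D(a,b),\,|T|\le k}(-1)^{|T|}$, which equals $1$ when $b=a$ and $0$ otherwise (using $|D(a,b)|\le k$). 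Consequently the matrix $(\tilde{P}_a(v_b))_{a,b}$ is the identity, forcing $N\le\dim V$, which recovers Alon's bound.

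The saving to Huang--Sudakov then comes from Peck's trick applied degree by degree. For each support $T\subseteq[d]$ of size $i\ge 1$, the degree-$i$ slice of $V$ supported on $T$ has $2^i$ basis monomials (one per choice of $x_j$ or $y_j$ for $j\in T$), and each $\tilde{P}_a$ with $T\subseteq\{j:\tau_a(j)\neq 0\}$ contributes coefficient $(-1)^i$ to exactly one of these monomials, determined by $\tau_a|_T$. One now exhibits a linear involution -- essentially, simultaneously swapping $x_j$ and $y_j$ across all $j\in T$ together with a compensating global sign -- under which the family $\{\tilde{P}_a\}$ decomposes into symmetric and antisymmetric parts, and only one eigenspace has non-trivial image in the evaluation matrix. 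This halves the effective dimension on every size-$i$ support from $2^i$ to $2^{i-1}$, and summing over $i\ge 1$ while retaining the constant monomial yields the claimed bound $N\le 1+\sum_{i=1}^{k}2^{i-1}\binom{d}{i}$.

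The main obstacle is executing the third step cleanly: identifying the correct involution in this bipartite-cover setting and verifying that the halving is exact on every degree, so that the constants match the claimed bound. Once the Peck-style symmetrization is in place, the rest of the argument is the same rank inequality already used for Alon's bound; the conceptual content of Huang--Sudakov lies entirely in exhibiting the right symmetry.
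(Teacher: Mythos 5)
First, a point of reference: the paper does not prove this statement at all---Theorem~\ref{thm:Huang2012} is imported from Huang and Sudakov~\cite{huang2012counterexample}, with only the one-line description that it follows from ``a clever rank argument and a trick of Peck.'' So your proposal must stand on its own, and judged that way it has a genuine gap located exactly where the content of the theorem lies. The first two-thirds of your argument are correct: the type vectors in $\{+,-,0\}^d$, the space $V$ of multilinear monomials avoiding $x_iy_i$ with $\dim V=\sum_{i=0}^{k}2^{i}\binom{d}{i}$, the truncated products $\tilde{P}_a$, and the computation $\tilde{P}_a(v_b)=\sum_{T\subseteq D(a,b),\,|T|\le k}(-1)^{|T|}$, which equals $1$ if $a=b$ and $0$ otherwise. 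But this only recovers Alon's bound (Theorem~\ref{thm:Alon1997}); the entire improvement to $1+\sum_{i=1}^{k}2^{i-1}\binom{d}{i}$ is the halving step, which you explicitly defer as ``the main obstacle.'' That is not a detail to be checked later---it is the theorem.

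Moreover, the mechanism you sketch for the halving does not work as stated. The involution $\sigma$ swapping $x_j\leftrightarrow y_j$ sends $\tilde{P}_a$ to the polynomial attached to the type vector in which $+$ and $-$ are interchanged, so the symmetric component evaluates as $\tfrac12\bigl(\tilde{P}_a(v_b)+\tilde{P}_a(\sigma(v_b))\bigr)$, and $\tilde{P}_a(\sigma(v_b))$ is governed by the number of indices $j$ with $\tau_a(j)=\tau_b(j)\neq 0$, i.e.\ the number of bipartite graphs in which $a$ and $b$ lie on the \emph{same} side. The covering hypothesis places no constraint on that quantity, so neither eigencomponent of $\tilde{P}_a$ has controlled values at the points $v_b$, and the claim that ``only one eigenspace has non-trivial image in the evaluation matrix'' is unsupported. (Your instinct about where the number $1+\sum_{i=1}^k 2^{i-1}\binom{d}{i}$ comes from is sound---it is the dimension of a $\sigma$-eigenspace of $V$---but the $\tilde{P}_a$ do not lie in, and cannot be projected into, that eigenspace without losing the evaluation identity.) The form of Peck's trick that actually succeeds here is additive rather than projective: one exhibits $\sum_{i=1}^{k}2^{i-1}\binom{d}{i}$ explicit additional polynomials (antisymmetric differences $m-\sigma(m)$, one per $\sigma$-orbit of monomials on each nonempty support) and proves that these, together with the $n$ vertex polynomials, remain jointly linearly independent inside $V$, giving $n+\sum_{i=1}^{k}2^{i-1}\binom{d}{i}\le\dim V$. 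Establishing that joint independence is the real work, and it is absent from your proposal; as written, your argument proves only the weaker bound $\sum_{i=0}^{k}2^{i}\binom{d}{i}$.
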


 Very recently, Alon, Grytczuk, Kisielewicz, and Przesławski~\cite{2022neighbor} provided an upper bound for $n(k,d)$ of a different type as follows.

  \begin{theorem}[{\cite[Theorem 1]{2022neighbor}}]\label{thm:previous}
For every $1 \leqslant k \leqslant k+2 i-2 \leqslant d-1$, where $i$ is an arbitrary positive integer,
$$
n(k, d) \leqslant 2^{d-i}+\sum_{j=0}^{\lceil(k+2i-2) / 2\rceil}\binom{d}{j}.
$$
\end{theorem}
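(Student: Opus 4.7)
The plan is to work in Alon's bipartite-covering formulation of the problem: a $k$-neighborly family of $n$ boxes in $\mathbb{R}^d$ is equivalent to a labelling of the vertices of $K_n$ by vectors $\sigma(v) \in \{-1, 0, +1\}^d$ such that for any two distinct vertices the number of coordinates carrying opposite nonzero signs lies in $\{1, \ldots, k\}$. Given such a labelling, the bound should be derived by choosing a reference vertex $v_0$ and splitting the remaining vertices into a \emph{near} and a \emph{far} class with respect to $v_0$, cut off at the radius $m := \lceil (k+2i-2)/2 \rceil$.

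Concretely, for each vertex $v \ne v_0$ I would consider the sign-disagreement set $T(v) := \{\ell : \sigma_\ell(v) = -\sigma_\ell(v_0) \ne 0\} \subseteq [d]$, which satisfies $|T(v)| \in [1,k]$ by the neighborly hypothesis. The near class $\mathcal{N} := \{v_0\} \cup \{v : |T(v)| \le m\}$ can be counted by sending $v$ to $T(v)$ (with $T(v_0) := \emptyset$) and noting that the image lies in the radius-$m$ Hamming ball of $2^{[d]}$, so that $|\mathcal{N}| \le \sum_{j=0}^{m}\binom{d}{j}$. Making this map (or a small enrichment of it) injective is the Kleitman-style input to the argument, essentially identifying each near vertex with a subset of $[d]$ of size at most $m$.

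For the far class $\mathcal{L} := \{v : |T(v)| > m\}$, the strategy is to single out a set $S \subseteq [d]$ of $i$ coordinates and argue that the projection $\sigma(v) \mapsto \sigma(v)|_{[d] \setminus S}$ onto the remaining $d-i$ coordinates, after collapsing to a $\{0,1\}$-support pattern, is injective on $\mathcal{L}$. This yields $|\mathcal{L}| \le 2^{d-i}$ and, combined with the near-class bound, gives $n \le 2^{d-i} + \sum_{j=0}^{m} \binom{d}{j}$. The upper hypothesis $k + 2i - 2 \le d - 1$ is exactly what makes this book-keeping close: it guarantees that any two far vertices, having $|T(v)|, |T(v')| \ge m+1$ but at most $k$ mutual sign-opposites, must already differ in enough of the retained $d - i$ coordinates for a $\{0,1\}^{d-i}$-signature to detect them.

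The main obstacle is therefore the far-class argument: selecting the $i$ coordinates to peel off and verifying that the reduced $\{0,1\}^{d-i}$-signature remains injective on $\mathcal{L}$. I expect this to be done either by an averaging argument over $i$-subsets of $[d]$ or by a greedy procedure that successively picks coordinates on which some pair of far vertices agrees in support, and removes the ambiguity. This is the step where the interplay between the parameter $i$ and the two-sided neighborly constraint is most delicate, and where the precise threshold $k + 2i - 2$ should appear as the break-even point of the double count.
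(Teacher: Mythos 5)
Your decomposition is by Hamming distance to a fixed reference vertex $v_0$, and both of the resulting counting steps are exactly the ones you leave open; unfortunately, neither of the maps you propose is injective, so the gaps are genuine. For the near class, $v\mapsto T(v)$ can collide: in the $\{0,1,*\}$ formulation of Proposition~\ref{prop:Equivalence}, take $\boldsymbol{u}_0=(0,0,*)$, $\boldsymbol{u}=(1,0,0)$, $\boldsymbol{u}'=(1,0,1)$; all pairwise Hamming distances equal $1$, yet $T(\boldsymbol{u})=T(\boldsymbol{u}')=\{1\}$, because $\boldsymbol{u}$ and $\boldsymbol{u}'$ differ only in a coordinate where $\boldsymbol{u}_0$ carries a joker. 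No enrichment built from $T$ alone can repair this, since $T(v)$ only records information inside the support of $v_0$. For the far class, membership in $\mathcal{L}$ constrains the distance of $v$ to $v_0$, not the number of jokers of $v$ itself, so two far vertices may both be joker-free; their $\{0,1\}$-support patterns are then both the all-ones vector, and no choice of $i$ deleted coordinates separates them. The role you assign to the hypothesis $k+2i-2\le d-1$ (making the far-class projection injective) is also not what that hypothesis does.

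The argument this bound actually comes from --- and which the machinery of Section~\ref{sec:General} reproduces --- splits $U\subseteq\{0,1,*\}^d$ by the \emph{number of jokers}, not by distance to a reference vertex. A vector of $U$ with at least $i$ jokers covers at least $2^i$ binary vectors, and by Claim~\ref{3.1} distinct members of $U$ cover disjoint sets, so there are at most $2^{d}/2^{i}=2^{d-i}$ such vectors. Each vector with at most $i-1$ jokers is replaced by a binary vector it covers; any two of these representatives are distinct and lie at Hamming distance at most $k+2(i-1)$ (this is Claim~\ref{claim:UpperBoundSum}, and the hypothesis $k+2i-2\le d-1$ is what lets Kleitman's theorem apply), so Theorem~\ref{thm:Kleitman} bounds their number by $\bigl|B^{(d)}_{k+2i-2}\bigr|\le\sum_{j=0}^{\lceil(k+2i-2)/2\rceil}\binom{d}{j}$. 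In other words, $\lceil(k+2i-2)/2\rceil$ is the radius of a Kleitman ball for the few-joker vectors, not a cutoff distance from $v_0$; Remark~\ref{improvement} is written exactly against this reading of the bound. To salvage your write-up, replace the near/far split by the joker-count split and supply the two injectivity statements above.
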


They also provided a general construction of $k$-neighborly families of boxes, which yields a new lower bound on $n(k,d)$. In particular, if $k$ is fixed and $d \to \infty$, they \cite[Theorem 2]{2022neighbor} showed that
 $n(k,d)\ge (1-o(1))\frac{d^{k}}{k!}, $ 
which improves Theorem~\ref{thm:Alon1997}. They also proved a nice recursive lower bound on $n(k,d)$ \cite[Proposition 2]{2022neighbor}. One novelty of their paper is to estimate $n(k,d)$ from a perspective related to the Hamming metric; see Section~\ref{sec:prelim}. 

Inspired by their work \cite{2022neighbor}, in this paper, we establish an improved upper bound on $n(k,d)$ for any positive integers $k$ and $d$ with $k\le d-2$. In particular, we prove that $n(d-1,d)=3\cdot 2^{d-2}$ as a special case; see Corollary~\ref{cor:d-1} (see also \cite[Proposition 1]{2022neighbor}). Our new upper bounds are closely related to an important configuration, namely $B_{k}^{(d)}$, in coding theory. More precisely, when $k=2t$, $B_{k}^{(d)}$ is a $d$-dimensional Hamming ball of radius $t$, that is, a collection of binary vectors of length $d$ having at most $t$ $1$-coordinates; when $k=2t+1$, $B_{k}^{(d)}$ is the Cartesian product of $\{0,1\}$ and a $(d-1)$-dimensional Hamming ball of radius $t$. It is clear that \begin{equation}\label{Bkd}
    \big|B_{k}^{(d)}\big|= 
    \begin{cases}
        \sum\limits_{i=0}^{t}\binom{d}{i} &  k=2t; \\ 
   2\sum\limits_{i=0}^{t}\binom{d-1}{i} &  k=2t+1.
    \end{cases}
\end{equation}  

   \begin{theorem}\label{thm:main}
    Let $1\leqslant k\leqslant d-1$ be integers. If $d-k$ is even, then
\begin{equation*}
     n(k,d) \leq \sum\limits_{j=0}^{\frac{d-k-2}{2}}\bigg(\frac{1}{2^{j+1}}-\frac{1}{2^{d-k-j}}\bigg) \big|B_{k+2j}^{(d)}\big|+2^{\frac{d+k}{2}}.
\end{equation*}
If $d-k$ is odd, then
    \begin{equation*}
     n(k,d) \leq    \sum\limits_{j=0}^{\frac{d-k-3}{2}}\bigg(\frac{1}{2^{j+1}}-\frac{1}{2^{d-k-j}}\bigg) \big|B_{k+2j}^{(d)}\big|+\frac{1}{2^{\frac{d-k+1}{2}}}\big|B_{d-1}^{(d)}\big|+2^{\frac{d+k-1}{2}}.
    \end{equation*}   
\end{theorem}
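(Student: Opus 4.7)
The plan is to adapt the coding-theoretic framework of Alon, Grytczuk, Kisielewicz, and Przesławski~\cite{2022neighbor} and sharpen it via a weighted counting scheme on the Hamming cube $\{0,1\}^d$, combined with the stability of Kleitman's isodiametric inequality mentioned in the abstract. The starting point, to be recorded in Section~\ref{sec:prelim}, is the encoding that turns any $k$-neighborly family $\mathcal{F}$ of $d$-dimensional standard boxes into a configuration on $\{0,1\}^d$: each box $C\in\mathcal{F}$ is assigned a collection $S(C)\subseteq\{0,1\}^d$ so that the $k$-neighborly condition forces the pooled (multi)set $T=\bigcup_{C\in\mathcal{F}}S(C)$ to have Hamming diameter at most $k$. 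Kleitman's theorem then yields $|T|\le|B_k^{(d)}|$, but this bound alone is too crude because one vector can be shared by many boxes.

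The key new idea is to weight the vectors by a function $w(r)$ depending only on their Hamming weight $r$, and to establish an inequality of the form
\[
|\mathcal{F}| \le \sum_{r=0}^{d} w(r)\, N_r, \qquad N_r := \#\{v\in T:|v|=r\},
\]
where the weights are chosen so that every box $C$ contributes at least $1$ to the right-hand side after averaging over the several representations available to it (one per admissible intersection dimension $d-k,d-k+1,\ldots,d-1$). The exact coefficients $\tfrac{1}{2^{j+1}}-\tfrac{1}{2^{d-k-j}}$ appearing in the statement should then emerge by solving a small linear system enforcing tightness on each Hamming-weight layer.

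Once such weights are in hand, I would control the partial sums $\sum_{r\le t+j}N_r$ by applying Kleitman's inequality to the restriction of $T$ to vectors of weight at most $t+j$: this restricted set still has diameter at most $k+2j$, so its cardinality is bounded by $|B_{k+2j}^{(d)}|$. Abel summation of these layer bounds against the weights above produces the main sum in the theorem. The residual additive term $2^{(d+k)/2}$ (respectively $\tfrac{1}{2^{(d-k+1)/2}}|B_{d-1}^{(d)}|+2^{(d+k-1)/2}$) accounts for the portion of $T$ lying above the top layer reached by the telescoping sum, and is estimated using the stability of Kleitman's inequality, which forces near-extremal configurations to be genuine Hamming balls.

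The main obstacle I anticipate is the construction and verification of the weight function $w$: the presence of the negative summand $-\tfrac{1}{2^{d-k-j}}$ indicates that the weights are not monotone and must balance several competing constraints coming from different intersection dimensions, so checking admissibility layer by layer will require careful bookkeeping. The parity split in the statement mirrors the two extremal shapes of $B_k^{(d)}$---a pure Hamming ball when $k$ is even and a product $\{0,1\}\times(\text{ball})$ when $k$ is odd, as recorded in \eqref{Bkd}---and this dichotomy forces the extra $|B_{d-1}^{(d)}|$ term in the case $d-k$ odd, where the telescoping sum must stop one layer earlier and the leftover "odd" layer is absorbed separately.
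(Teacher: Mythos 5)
Your high-level framing (encode boxes as words over $\{0,1,*\}$, weight points of the cube, control partial sums by Kleitman, Abel-sum against the coefficients) points in the right direction, but the core mechanism is missing and two of your concrete steps would fail. The correct encoding (Proposition~\ref{prop:Equivalence}) represents each box by a \emph{single} word $\boldsymbol{u}\in\{0,1,*\}^d$, with pairwise distances in $\{1,\dots,k\}$ on non-joker coordinates; a word with $t$ jokers covers exactly $2^t$ points of $\{0,1\}^d$, each point is covered by at most one word, and the weight of a point must therefore be $2^{-t}$ where $t$ is the number of jokers of its unique covering word --- not a function $w(r)$ of the point's Hamming weight $r$. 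With layer-indexed weights there is no way to make each box contribute exactly $1$, and the sets whose sizes are bounded by $\big|B_{k+2j}^{(d)}\big|$ are $\bigcup_{s\le j}V^{(s)}$ (points covered by words with at most $j$ jokers), not the low-Hamming-weight layers of your pooled set $T$: your claim that the restriction of $T$ to weight at most $t+j$ has diameter at most $k+2j$ is false in general, since nothing ties the Hamming weight of a covered point to the number of jokers of the word covering it.

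Moreover, the engine that produces both the negative summands $-2^{-(d-k-j)}$ and the additive term $2^{(d+k)/2}$ is an antipodal pairing that you do not mention: since $d_{H}(\boldsymbol{v},\overline{\boldsymbol{v}})=d$ for $\overline{\boldsymbol{v}}=\boldsymbol{1}-\boldsymbol{v}$, if $\boldsymbol{v}$ is covered by a word with $a$ jokers and $\overline{\boldsymbol{v}}$ by a word with $b$ jokers then $a+b\ge d-k$; this yields $f(\boldsymbol{v})+f(\overline{\boldsymbol{v}})\le 2^{-(i+1)}+2^{-(d-k-i-1)}$ for pairs outside $\bigcup_{s\le i}(V^{(s)}\cup\overline{V^{(s)}})$, and summing over the $2^{d-1}$ antipodal pairs, Abel-summing against the Kleitman bounds, and optimizing the cutoff $i$ gives exactly the stated coefficients and the residual term. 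Finally, the stability of Kleitman's theorem plays no role in this theorem --- the paper uses it only later, for the exact values of $n(3,6)$, $n(5,7)$, and $n(6,8)$ --- so invoking it to estimate the residual term is a misattribution rather than a workable step.
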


It is not hard to show that Theorem~\ref{thm:main} always improves Theorem~\ref{thm:previous}. Moreover, when $k,d$ are sufficiently large and $k\ge0.123d$, our upper bounds are asymptotically smaller than the upper bounds in Theorem~\ref{thm:Huang2012}. We refer to the detailed computations for such a comparison in Section~\ref{sec:comparison}.

When $k$ is fixed, and $d$ is sufficiently large, the following theorem further improves the upper bound on $n(k,d)$; see Remark~\ref{d_sufficiently_large}.
\begin{theorem}\label{thm:main2}
    Let $1\leqslant k\leqslant d-1$ be integers. 
    If $d-k$ is even, then
\begin{equation*}
     n(k,d) \leq \max \left\{\sum_{j=0}^k \binom{d}{j},  \sum\limits_{j=1}^{\frac{d-k-2}{2}}\bigg(\frac{1}{2^{j+1}}-\frac{1}{2^{d-k-j}}\bigg) \big|B_{k+2j}^{(d)}\big|+2^{\frac{d+k}{2}}\right\}.
\end{equation*}
If $d-k$ is odd, then
    \begin{equation*}
     n(k,d) \leq    \max \left\{\sum_{j=0}^k \binom{d}{j}, \sum\limits_{j=1}^{\frac{d-k-3}{2}}\bigg(\frac{1}{2^{j+1}}-\frac{1}{2^{d-k-j}}\bigg) \big|B_{k+2j}^{(d)}\big|+\frac{1}{2^{\frac{d-k+1}{2}}}\big|B_{d-1}^{(d)}\big|+2^{\frac{d+k-1}{2}}\right\}.
    \end{equation*}   
\end{theorem}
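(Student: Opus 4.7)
My plan is to sharpen the weighted counting argument behind Theorem~\ref{thm:main} via a structural dichotomy based on the stability version of Kleitman's isodiametric inequality. Note that Theorem~\ref{thm:main2} differs from Theorem~\ref{thm:main} only in that the $j=0$ summand $\left(\frac{1}{2}-\frac{1}{2^{d-k}}\right)|B_k^{(d)}|$ has been dropped from the sum, while an auxiliary bound $\sum_{j=0}^k\binom{d}{j}$ has been inserted inside a maximum. So the task is to replace the $j=0$ contribution, which arises from applying Kleitman's inequality at the extremal layer, by $\sum_{j=0}^k\binom{d}{j}$ exactly when the extremal Hamming-ball configuration is (almost) achieved.

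Following the framework developed for Theorem~\ref{thm:main}, I would associate to a $k$-neighborly family $\mathcal{F}$ of $d$-dimensional standard boxes a set $V\subseteq\{0,1\}^d$ of binary vectors (the endpoints of the intervals, or a similar combinatorial encoding mentioned in Section~\ref{sec:Introduction}). Since any two boxes in $\mathcal{F}$ differ in at most $k$ coordinates, $V$ has Hamming diameter at most $k$, so Kleitman's inequality gives $|V|\leq|B_k^{(d)}|$; it is this inequality that produces the $j=0$ term in Theorem~\ref{thm:main}. I would then split into two cases according to how close $V$ is to some Hamming ball of radius $\lfloor k/2\rfloor$ in symmetric difference. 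In the \emph{near-extremal} case, the structural rigidity imposed by the near-ball shape forces $\mathcal{F}$ to embed into a Hamming ball of radius $k$ (suitably interpreted on the box side), yielding $|\mathcal{F}|\leq\sum_{j=0}^k\binom{d}{j}$, which is the first branch of the maximum. In the \emph{stable} case, the stability form of Kleitman's inequality gives $|V|\leq|B_k^{(d)}|-\delta$ for a quantitative gap $\delta$, and plugging this back into the weighted count absorbs the $\left(1/2-1/2^{d-k}\right)|B_k^{(d)}|$ contribution into the boundary term $2^{(d+k)/2}$ together with the remaining higher-$j$ summands, producing the second branch.

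The main technical obstacle is calibrating the stability gap $\delta$ against the weights $1/2^{j+1}-1/2^{d-k-j}$ of the other layers, so that the stable case truly absorbs the $j=0$ contribution without any polynomial slack; a naive stability estimate of Hilton-Milner or Frankl-Furedi type will likely not suffice, and one may need a sharp Kleitman-type stability in the spirit of Kleitman-Edelman or Friedgut. A secondary subtlety is that the parity of $d-k$ changes the boundary term (giving either $2^{(d+k)/2}$ or the expression involving $|B_{d-1}^{(d)}|$), so I would need to verify that the same dichotomy functions uniformly in both parities, including at the extreme index of the sum where the weights and layer sizes are both large. I expect the proof to rely on an existing sharp stability theorem for Kleitman's isodiametric inequality rather than re-deriving one from scratch.
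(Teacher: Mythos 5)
Your overall plan---isolate the $j=0$ contribution to the weighted count and replace it by the branch $\sum_{j=0}^k\binom{d}{j}$ of the maximum via a case distinction---identifies the right target, but the dichotomy you propose is not the one that makes the argument work, and as stated it has a genuine gap. The second branch of the maximum in Theorem~\ref{thm:main2} is exactly the bound of Theorem~\ref{thm:main} with the $j=0$ summand deleted entirely; in the notation of Section~\ref{sec:General} this corresponds to the case $V^{(0)}=U\cap\{0,1\}^d=\emptyset$, i.e.\ to the complete absence of joker-free vectors in $U$, not to $V^{(0)}$ failing to be ball-shaped. Frankl's stability theorem (Theorem~\ref{thm:StabilityFrankl}) only improves Kleitman's bound by $\binom{d-t-1}{t}-1$, which is a vanishing fraction of $\bigl|B_k^{(d)}\bigr|$; no stability statement of this type can absorb the term $\bigl(\tfrac12-\tfrac1{2^{d-k}}\bigr)\bigl|B_k^{(d)}\bigr|$, whose coefficient is a constant. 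You flag this calibration as ``the main technical obstacle,'' and for the route you describe it is in fact insurmountable rather than merely technical.

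The dichotomy that works is much simpler and involves no stability at all. Either $V^{(0)}=\emptyset$, in which case the proof of Theorem~\ref{thm:main} goes through verbatim with the $j=0$ term dropped, yielding the second branch; or there exists $\boldsymbol{u}_0\in U\cap\{0,1\}^d$, which one may take to be $\boldsymbol{0}$. Then every $\boldsymbol{u}\in U$ satisfies $d_{H}(\boldsymbol{0},\boldsymbol{u})\le k$, so $\boldsymbol{u}$ covers the vector $\boldsymbol{w}$ obtained by replacing each joker of $\boldsymbol{u}$ with $0$, and $\boldsymbol{w}$ lies in the Hamming ball of radius $k$ centered at $\boldsymbol{0}$. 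Since distinct members of $U$ cannot cover the same binary vector (Claim~\ref{3.1}), this gives an injection from $U$ into that ball, hence $|U|\le\sum_{j=0}^k\binom{d}{j}$, which is the first branch. Note that this argument uses only the existence of a single joker-free vector and imposes no structural hypothesis on $V^{(0)}$ whatsoever; the stability version of Kleitman's theorem enters the paper only later, in the determination of the exact values in Theorem~\ref{thm:ShortTight}.
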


Alon~\cite{1997Alon} remarked that even the precise determination of $n(2,d)$ seems difficult. As a consequence of Theorem~\ref{thm:main2}, we improve the best-known upper bound on $n(2,d)$ for any $d\le 9$. Note that Theorem~\ref{thm:main2} only implies that $n(2,10)\leq 108$, which is worse than $n(2,10) \leq 101$ from Theorem~\ref{thm:Huang2012}. Nevertheless, we further develop the ideas in the proof of Theorem~\ref{thm:main2} in Theorem~\ref{thm:main3}, which in particular gives a better upper bound $n(2,10) \leq 95$.
\begin{cor}
\begin{equation*}
  n(2,4)=9;\ n(2,5)\le 14;\  n(2,6)\le 21;\ n(2,7)\le 29;\ n(2,8)\le 45;\ n(2,9)\le 70;\ n(2,10) \le 95.
\end{equation*}
   
\end{cor}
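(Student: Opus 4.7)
The plan is to specialize Theorem~\ref{thm:main} and Theorem~\ref{thm:main2} to $k=2$, pick whichever gives the tighter bound for each $d\in\{4,5,6,7,8,9\}$, and invoke the refinement promised in Remark~\ref{rmk:further_improvement} to handle $d=10$. Using \eqref{Bkd}, for $k=2$ one has $|B_{2+2j}^{(d)}|=\sum_{i=0}^{j+1}\binom{d}{i}$ for every $j$, and $|B_{d-1}^{(d)}|=2\sum_{i=0}^{(d-2)/2}\binom{d-1}{i}$ when $d$ is odd. Substituting these into the explicit sums appearing in each theorem reduces the task to a finite arithmetic computation.

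A direct computation shows that Theorem~\ref{thm:main} is sharper in low dimension while Theorem~\ref{thm:main2} wins once $d\ge 7$. Quantitatively, Theorem~\ref{thm:main} evaluates to at most $9.25$, $14.25$, and $21.8125$ for $d=4,5,6$ respectively, while the $\max$ in Theorem~\ref{thm:main2} evaluates to at most $29.4375$, $45.91\ldots$, and $70.97\ldots$ for $d=7,8,9$ respectively; the claimed integer bounds then follow from $n(k,d)\in\mathbb{Z}$. For the equality $n(2,4)=9$, the matching lower bound is Alon's product construction from Theorem~\ref{thm:Alon1997}: $\prod_{i=0}^{1}(\lfloor(4+i)/2\rfloor+1)=3\cdot 3=9$.

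The case $d=10$ is the only delicate one: the raw Theorem~\ref{thm:main2} only yields $n(2,10)\le 108$, so I would appeal to the sharpened argument foreshadowed in Remark~\ref{rmk:further_improvement}, which refines how the $j=0$ contribution is counted, to reach $n(2,10)\le 95$. The main obstacle is really just careful bookkeeping: tracking which branch of the $\max$ in Theorem~\ref{thm:main2} dominates, handling the parity of $d-k$ correctly in the tail term, and executing the Remark~\ref{rmk:further_improvement} refinement faithfully in the remaining $d=10$ case.
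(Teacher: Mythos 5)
Your proposal is correct and follows essentially the same route as the paper: the corollary is obtained by specializing Theorem~\ref{thm:main} (for $d=4,5,6$), Theorem~\ref{thm:main2} (for $d=7,8,9$, cf.\ Example~\ref{example:272727}), and the $h=1$ case of Remark~\ref{rmk:further_improvement} for $d=10$, with the matching lower bound $n(2,4)\ge 9$ coming from Alon's construction in Theorem~\ref{thm:Alon1997}. Your numerical evaluations ($9.25$, $14.25$, $21.8125$, $29.4375$, $45.90625$, $70.96875$) all check out.
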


Another goal of our paper is to determine exact values on $n(k,d)$ when $k$ and $d$ are small. In this direction, the authors in~\cite{2022neighbor} took advantage of the Gurobi solver to compute lower bounds on $n(k,d)$ for small $k,d$. Combining our upper bounds in Theorem~\ref{thm:main}, we can immediately obtain the following corollary.
    \begin{cor}\label{cor:ToMain}
        \begin{equation*}
       n(3,5)=18;\ n(4,6)=37;\ 27\le n(3,6)\le 28;\ 74\le n(5,7)\le 75;\ 150\le n(6,8)\le 151.
        \end{equation*}
    \end{cor}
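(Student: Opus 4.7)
The plan is to derive each stated bound as a direct arithmetic consequence of \Cref{thm:main}, matched against the lower bounds produced by the constructions of Alon, Grytczuk, Kisielewicz, and Przes\l awski in \cite{2022neighbor} (obtained via their Gurobi search). In other words, the work here is not to prove anything new but to substitute $(k,d) \in \{(3,5),(4,6),(3,6),(5,7),(6,8)\}$ into the formulas of \Cref{thm:main}, expand $|B_{k}^{(d)}|$ via \eqref{Bkd}, round down (since $n(k,d)$ is an integer), and compare.

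First, for the three cases with $d-k=2$ (even), only the $j=0$ term appears in the sum, so \Cref{thm:main} reduces to
\begin{equation*}
n(k,d) \;\le\; \tfrac{1}{4}\,\big|B_{k}^{(d)}\big| \;+\; 2^{(d+k)/2}.
\end{equation*}
Plugging in $(k,d)=(3,5)$ gives $|B_3^{(5)}|=2(1+4)=10$, and hence $n(3,5)\le \tfrac{10}{4}+16 = 18.5$, i.e.\ $n(3,5)\le 18$. For $(k,d)=(4,6)$, $|B_4^{(6)}|=1+6+15=22$, so $n(4,6)\le \tfrac{22}{4}+32=37.5$, hence $n(4,6)\le 37$. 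For $(k,d)=(5,7)$, $|B_5^{(7)}|=2(1+6+15)=44$, so $n(5,7)\le 11+64=75$. For $(k,d)=(6,8)$, $|B_6^{(8)}|=1+8+28+56=93$, so $n(6,8)\le \tfrac{93}{4}+128=151.25$, hence $n(6,8)\le 151$.

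For the remaining case $(k,d)=(3,6)$ with $d-k=3$ (odd), only the $j=0$ term appears in the sum portion of the odd formula, so \Cref{thm:main} gives
\begin{equation*}
n(3,6) \;\le\; \Big(\tfrac{1}{2}-\tfrac{1}{8}\Big)\big|B_3^{(6)}\big| \;+\; \tfrac{1}{4}\big|B_5^{(6)}\big| \;+\; 2^{4}.
\end{equation*}
Using $|B_3^{(6)}|=2(1+5)=12$ and $|B_5^{(6)}|=2(1+5+10)=32$, one obtains $n(3,6)\le \tfrac{3}{8}\cdot 12 + \tfrac{1}{4}\cdot 32 + 16 = 4.5+8+16 = 28.5$, so $n(3,6)\le 28$.

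Matching these upper bounds against the lower bounds $n(3,5)\ge 18$, $n(4,6)\ge 37$, $n(3,6)\ge 27$, $n(5,7)\ge 74$, and $n(6,8)\ge 150$ from \cite{2022neighbor} closes the first two cases to equality and yields the asserted ranges in the remaining three. The only potential obstacle is bookkeeping: confirming that the $|B_k^{(d)}|$ values are evaluated with the correct parity of $k$ in \eqref{Bkd} (in particular, $|B_3^{(5)}|$ and $|B_5^{(7)}|$ and $|B_5^{(6)}|$ use the odd branch), and that the rounding step is applied before comparison with the lower bounds. No additional theoretical input is required.
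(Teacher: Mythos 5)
Your proposal is correct and matches the paper's (implicit) proof exactly: the corollary is stated as an immediate consequence of substituting the five parameter pairs into Theorem~\ref{thm:main}, rounding down, and pairing the results with the lower bounds from the Gurobi computations in \cite{2022neighbor}. All of your evaluations of $\big|B_{k}^{(d)}\big|$ and the resulting bounds ($18.5$, $37.5$, $28.5$, $75$, $151.25$) check out.
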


Using a stability version of Kleitman's theorem (see Theorem~\ref{thm:StabilityFrankl}), we further determine the exact values of $n(3,6)$, $n(5,7)$ and $n(6,8)$.
\begin{theorem}\label{thm:ShortTight}
    \begin{equation*}
      n(3,6)=27;\  n(5,7)=74;\ n(6,8)=150.
    \end{equation*}
\end{theorem}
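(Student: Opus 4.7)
\textbf{Proof plan for Theorem~\ref{thm:ShortTight}.} The matching lower bounds $n(3,6)\geq 27$, $n(5,7)\geq 74$, $n(6,8)\geq 150$ are already witnessed by the explicit constructions obtained via the Gurobi solver in \cite{2022neighbor}. Combining with Corollary~\ref{cor:ToMain}, each of the three pairs is off by at most one, so my only task is to rule out $n(3,6)=28$, $n(5,7)=75$, and $n(6,8)=151$.

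My first step is to revisit the machinery behind Theorem~\ref{thm:main}. That proof encodes a $k$-neighborly family of standard boxes in $\mathbb{R}^{d}$ as a collection of pairs of subsets of $\{0,1\}^{d}$ equipped with the Hamming metric (following the framework of \cite{2022neighbor}), partitions the associated set system into ``diameter-$m$'' layers for $m=k,k+2,\dots$, and then applies Kleitman's isodiametric inequality $|A|\leq |B_{m}^{(d)}|$ on each layer. The rational coefficients $\frac{1}{2^{j+1}}-\frac{1}{2^{d-k-j}}$ appearing in Theorem~\ref{thm:main} record how many distinct layers a single element of the original box family contributes to, and the tail terms $2^{(d+k)/2}$ or $2^{(d+k-1)/2}$ come from the innermost balls that the averaging cannot separate.

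The key observation is that equality in Theorem~\ref{thm:main} requires equality in Kleitman's inequality on \emph{every} layer that carries a nonzero coefficient. By the stability version (Theorem~\ref{thm:StabilityFrankl}), this in turn forces each such layer to be a Hamming ball $B_{k+2j}^{(d)}$ (up to a translation by an element of $\{0,1\}^{d}$, which corresponds to swapping the two defining intervals in some coordinate). Otherwise the layer loses at least one element compared to Kleitman's bound; multiplying by the corresponding positive coefficient and using the integrality of $n(k,d)$ rounds the upper bound down by at least $1$, which is precisely the extra slack I need in each of the three cases. I would carry out this numerical book-keeping explicitly for $(k,d)=(3,6),(5,7),(6,8)$ to confirm that a deficit of $1$ in any contributing layer is enough.

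It remains to exclude the ``rigid'' scenario in which every contributing layer is exactly a Hamming ball. Here I would show that the centres of these balls, one per layer, are forced by the $k$-neighborly condition to satisfy incompatible constraints: two Hamming balls of radii $r_{1}$ and $r_{2}$ coming from the same box family must have centres whose Hamming distance lies in a prescribed interval (determined by the neighborliness condition), but a parity/double-counting argument on the centres shows that no such configuration exists for the specific triples $(k,d)$ in question. I expect this last combinatorial rigidity step to be the main obstacle, since it genuinely depends on the small values of $k$ and $d$; for larger parameters the coefficient $\frac{1}{2^{j+1}}-\frac{1}{2^{d-k-j}}$ becomes so small that stability alone no longer rounds off an integer. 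For our three cases this centre-incompatibility check reduces to a short finite verification, completing the argument that $n(3,6)=27$, $n(5,7)=74$, and $n(6,8)=150$.
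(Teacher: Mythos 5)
Your overall strategy---use Corollary~\ref{cor:ToMain} to reduce to excluding the values $28$, $75$, $151$, then argue that near-equality forces the layer $V^{(0)}$ to be an extremal Kleitman configuration via Theorem~\ref{thm:StabilityFrankl}---is exactly the paper's starting point. But the proposal has a genuine gap at the step you yourself flag as ``the main obstacle'': the contradiction you propose to derive from the rigid scenario (incompatible Hamming distances between the \emph{centres} of several extremal balls, one per layer) does not match the actual obstruction and would not get off the ground here. For $(k,d)=(5,7),(6,8)$ only the single layer $j=0$ carries a coefficient, and for $(3,6)$ essentially the same is true, so there are no ``several centres'' to play off against each other. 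The paper's contradiction is of a different nature: once $\overline{V^{(0)}}$ is pinned inside a copy of $B_{k}^{(d)}$, the tightness of the weight count forces the vectors of $U$ with exactly $2$ (or $3$) jokers to cover $\overline{V^{(0)}}$ exactly, and a covering count (Claims~\ref{claim:57575757} and~\ref{claim:68686868}) shows that no joker-vector can cover two distinct vectors on the outer shell $\{1\}\square\binom{[6]}{2}$ resp.\ $\binom{[8]}{3}$, since their join would escape $\overline{V^{(0)}}$; there are then too few joker-vectors ($11<15$, $23<55$, etc.) to do the job. Nothing in your sketch supplies this covering argument or an equivalent.

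A second, quantitative problem: you assert that a deficit of one element in a contributing layer, ``multiplied by the corresponding positive coefficient,'' combined with integrality, rounds the bound down by $1$. The coefficients are fractions such as $\frac{1}{4}$ or $\frac{3}{8}$, so losing one element from $|V^{(0)}|$ typically lowers the upper bound by only $\frac{1}{4}$ or $\frac{3}{8}$. Concretely, for $n(6,8)$ the bound reads $|U|\le 128+\frac{1}{4}|V^{(0)}|$, so both $|V^{(0)}|=93$ and $|V^{(0)}|=92$ are consistent with $|U|=151$; the stability theorem only eliminates the case where $\overline{V^{(0)}}$ is \emph{not} inside a Hamming ball (which forces $|V^{(0)}|\le 90$). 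The surviving cases $|V^{(0)}|\in\{92,93\}$ (and $\{11,12\}$ for $n(3,6)$) each require a separate, rather delicate analysis mixing the covering claim above with integrality of weight sums (e.g.\ $93$ vectors of weight $\frac{1}{4}$ cannot sum to an integer, forcing extra weight loss elsewhere). Your proposal as written would stop short of these case distinctions, so the argument is incomplete for all three values and the proposed ``centre-incompatibility'' finite check is not a viable substitute.
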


We collect some exact values and bounds on $n(k,d)$ for small $k$ and $d$ in Table~\ref{tab:values}. The first two are lower bounds and the others are upper bounds. We also list all the improved bounds on $n(k,d)$ for $k,d\le 20$ and $d-k\ge 2$ in Appendix.

\begin{table}[H]
    \centering
\begin{tabular}{|c|c|c|c|c|c|c|}
	\hline
$d$	& $k$ & Thm~\ref{thm:Alon1997}~\cite{1997Alon} & Gurobi~\cite[Table 2]{2022neighbor}  & Our new upper bounds  &Thm 1.2~\cite{huang2012counterexample}  &Thm 1.3~\cite{2022neighbor}\\
	\hline
5	& 3 & 18 & 18 &\textbf{18}  & 66 & 32 \\
	\hline
6	& 3 &27  &27  & \textbf{27} & 117 &  54\\
	\hline
6	& 4 & 36 & 37 & \textbf{37} & 237 & 54 \\
	\hline
7	& 5 & 72 & 74 & \textbf{74} & 806 & 128 \\
	\hline
8	& 6 & 144 & 150 & \textbf{150} & 2641 & 221 \\
\hline
	\hline
4	& 2 & 9 & -- &\textbf{9}  & 17 & 13 \\
\hline
5	& 2 &  12&  12& 14 & 26 & 22 \\
	\hline
6	& 2 &  16&  16& 21  & 37 & 38 \\
	\hline
 7	& 2 &  20&  21& 29 & 50 & 61\\
	\hline
8	& 2 & 25& 27 & 45 & 65 & 101 \\
 \hline
 9	& 2 & 30 & -- & 70 & 82 & 174 \\
 \hline
 10	& 2 & 36 & -- & 95 & 101 & 304 \\
 \hline
\end{tabular}
\caption{Some exact values and bounds of $n(k,d)$ (the bold numbers are exact values)}
    \label{tab:values}

\end{table}

\medskip
{\bf \noindent Notations.}
We will use $\binom{[d]}{k}$ to denote the set of all vectors with length $d$ and $k$ $1$-coordinates and $\binom{[d]}{\leq k}$ to denote the set of all vectors with length $d$ and at most $k$ $1$-coordinates, that is, the $d$-dimensional Hamming ball of radius $k$ centered at $\boldsymbol{0}$.
We use $X\square Y:=\{(\boldsymbol{x},\boldsymbol{y}):\boldsymbol{x}\in X, \boldsymbol{y}\in Y\}$ to denote the Cartesian product of two sets $X$ and $Y$. For example, $\{0,1\}\square\binom{[d-1]}{\le k}$ can be viewed as the Cartesian product of $\{0,1\}$ and a $(d-1)$-dimensional Hamming ball of radius $k$. For two vectors $\boldsymbol{v}=(v_{1},v_{2},\ldots,v_{d})$ and $\boldsymbol{u}=(u_{1},u_{2},\ldots,u_{d})$ in $\{0,1\}^{d}$, denote $\boldsymbol{w}=\boldsymbol{v}\lor \boldsymbol{u}=(w_{1},w_{2},\ldots,w_{d})$, where for each $1\le i\le d$, $w_{i}=0$ if both of $u_{i}$ and $v_{i}$ are equal to $0$, and $w_{i}=1$ otherwise. 

\medskip
\noindent\textbf{Structure of the paper.}
The rest of this paper is organized as follows. We further introduce the extremal problem we focus on and collect some useful tools in Section~\ref{sec:prelim}. We present the proofs of Theorems~\ref{thm:main} and Theorem~\ref{thm:main2} in Section~\ref{sec:General}. The proofs of exact values of $n(k,d)$ are in Section~\ref{sec:values}. Finally, we conclude and discuss some related work in Section~\ref{sec:Conclusion}. We also list the improved bounds on $n(k,d)$ for relatively small $k$ and $d$ in Appendix.

\section{Preliminaries}\label{sec:prelim}

Very recently, Alon, Grytczuk, Kisielewicz, and Przesławski~\cite{2022neighbor} provided a new interpretation of the function $n(k,d)$. They showed that the problem of determining the function $n(k,d)$ is equivalent to an extremal problem on a set of vectors with restricted Hamming distances. 

Recall that for vectors $\boldsymbol{v}_{1},\boldsymbol{v}_{2}\in\{0,1\}^{d}$, the \emph{Hamming distance} between $\boldsymbol{v}_{1}$ and $\boldsymbol{v}_{2}$ is the number of coordinates they differ, denoted by $d_{H}(\boldsymbol{v}_{1},\boldsymbol{v}_{2})$. Following the notions in \cite{2022neighbor}, we call $*$ a \emph{joker}. We can define the Hamming distance in $\{0,1,*\}^d$ similarly. For two vectors $\boldsymbol{u}_{1},\boldsymbol{u}_{2}\in\{0,1,*\}^{d}$, the Hamming distance between them is defined to be the number of coordinates they differ in non-joker positions. For example, the Hamming distance between $(1,1,*,*,0,0)$ and $(*,*,1,0,*,1)$ is exactly $1$. This generalization of Hamming distance is closely related to the addressing problem for graphs \cite{1972GrahamPollak}.

The following proposition describes an explicit connection between $n(k,d)$ and the size of an extremal set vectors in $\{0,1,*\}^d$ with restricted Hamming distance:

\begin{prop}[{\cite[Section 2]{2022neighbor}}]\label{prop:Equivalence}
Let $k,d\in\mathbb{N}$. The function $n(k,d)$ is equal to the maximum size of $U \subseteq \{0,1,*\}^{d}$ such that the Hamming distance between $\boldsymbol{u}_{i}$ and $\boldsymbol{u}_{j}$ belongs to $\{1,2,\ldots,k\}$ for any distinct $\boldsymbol{u}_{i},\boldsymbol{u}_{j}\in U$. 
\end{prop}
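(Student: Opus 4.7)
The plan is to establish the equivalence by constructing explicit correspondences in both directions. For the bound $n(k,d) \ge |U|$, given any $U \subseteq \{0,1,*\}^d$ with pairwise Hamming distances in $\{1,\ldots,k\}$, I would map each $\boldsymbol{u} = (u_1,\ldots,u_d) \in U$ to the standard box $C^{(\boldsymbol{u})} = K_1^{(\boldsymbol{u})} \times \cdots \times K_d^{(\boldsymbol{u})}$, where $K_i^{(\boldsymbol{u})}$ equals $[0,1]$, $[1,2]$, or $[0,2]$ according as $u_i$ is $0$, $1$, or $*$. A coordinate-wise check then shows that any two such intervals overlap in a positive-length sub-interval except when the pair of labels is $\{0,1\}$, in which case they meet only at the single point $\{1\}$. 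Consequently $\dim(C^{(\boldsymbol{u})} \cap C^{(\boldsymbol{v})}) = d - d_H(\boldsymbol{u},\boldsymbol{v}) \in \{d-k,\ldots,d-1\}$, and the boxes are pairwise distinct because any pair with $d_H \ge 1$ has some coordinate $i$ where $\{u_i,v_i\} = \{0,1\}$.

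For the bound $n(k,d) \le |U|$, the plan is to encode any $k$-neighborly family $\mathcal{F} = \{C^{(1)}, \ldots, C^{(N)}\}$, with $C^{(j)} = K_1^{(j)} \times \cdots \times K_d^{(j)}$, as a set of vectors in $\{0,1,*\}^d$ of the same cardinality. Fix a coordinate $i$: since $\dim(C^{(j)} \cap C^{(l)}) \ge d-k \ge 0$ forces the boxes to pairwise intersect, the one-dimensional intervals $\{K_i^{(j)}\}_{j=1}^N$ also pairwise intersect in $\mathbb{R}$, and the one-dimensional Helly theorem provides a point $p_i$ contained in every $K_i^{(j)}$. I would define $\phi(C^{(j)})_i$ to be $0$ if $p_i$ is the right endpoint of $K_i^{(j)}$, $1$ if $p_i$ is its left endpoint, and $*$ if $p_i$ lies strictly in the interior; since each $K_i^{(j)}$ is a nondegenerate interval, these three cases are exhaustive and mutually exclusive.

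The crux is then the identity $d_H(\phi(C^{(j)}), \phi(C^{(l)})) = d - \dim(C^{(j)} \cap C^{(l)})$ for distinct $j,l$. In any coordinate $i$, if the two labels agree or at least one of them is $*$, then a one-sided neighborhood of $p_i$ lies in both intervals, so they overlap in positive length; if the labels are $\{0,1\}$, one interval ends at $p_i$ on the right and the other begins at $p_i$ on the left, so their intersection is exactly $\{p_i\}$. Conversely, any tangent pair of intervals must meet at $p_i$, because $p_i$ lies in their (single-point) intersection. Summing the coordinate contributions yields the identity, whence $d_H \in \{1,\ldots,k\}$ follows directly from the $k$-neighborly hypothesis; injectivity of $\phi$ is automatic, since $\phi(C^{(j)}) = \phi(C^{(l)})$ would force $d_H = 0$ and hence $\dim = d$, contradicting the upper bound $d-1$.

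The main technical obstacle I anticipate is choosing the pivot $p_i$ correctly and tracking the three-way encoding in the boundary cases where several intervals share an endpoint with $p_i$; once the Helly pivot is fixed in each coordinate, the correspondence between tangent pairs and $\{0,1\}$-label pairs emerges from a compact case check, after which both inequalities combine to give the claimed equality.
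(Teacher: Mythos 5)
Your proposal is correct: the normalization of each box to coordinate intervals $[0,1]$, $[1,2]$, $[0,2]$ (equivalently the Helly-point encoding into $\{0,1,*\}^d$ with the identity $\dim(C\cap C')=d-d_H$) is exactly the argument behind Proposition~\ref{prop:Equivalence}, which this paper does not reprove but imports from \cite[Section~2.2]{2022neighbor}. The only hypothesis you should make explicit is $1\le k\le d$ (the paper's standing assumption), since you use $d-k\ge 0$ to guarantee that the boxes, and hence the coordinate intervals, pairwise intersect before invoking Helly.
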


The starting point of the proof of our main results is the above interpretation of $n(k,d)$.  Our proof of Theorem~\ref{thm:main} is based on a weighted count of vectors in $\{0,1\}^{d}$. As a preparation, we revisit a classical extremal result related to restricted Hamming distances, namely Kleitman's theorem~\cite{1966Kleitman}.

\begin{theorem}[\cite{1966Kleitman}]\label{thm:Kleitman}
    For integers $d>k$ and a set $A$ of vectors in $\{0,1\}^{d}$ with $d_{H}(\boldsymbol{u},\boldsymbol{v})\le k$ for any vectors $\boldsymbol{u},\boldsymbol{v}\in A$, then
\begin{equation}\label{equ:Kleitman}
    |A|\leq 
    \begin{cases}
        \sum\limits_{i=0}^{t}\binom{d}{i} &  k=2t; \\ 
   2\sum\limits_{i=0}^{t}\binom{d-1}{i} &  k=2t+1.
    \end{cases}
\end{equation}  
Moreover, the above upper bound can be attained by the configuration $B_{k}^{(d)}$.
\end{theorem}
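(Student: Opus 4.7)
The plan is to follow Kleitman's classical shifting approach, which proceeds in three stages.

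First, I would introduce admissible shifting operations on $A$: for each pair of coordinates $i < j$, the shift $S_{ij}$ replaces any $v \in A$ with $v_i = 0, v_j = 1$ by the vector $v'$ obtained from $v$ by flipping both the $i$-th and $j$-th coordinates, provided this neither coincides with an existing element of $A$ nor increases any pairwise Hamming distance above $k$. Each admissible shift strictly decreases the potential $\sum_{v \in A}\sum_{i} i \cdot v_i$, so after finitely many iterations $A$ becomes shifted, while both $|A|$ and the diameter bound are preserved.

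Second, I would induct on $d$ for shifted sets. Split $A = A_0 \sqcup A_1$ by the value of the last coordinate and project to $A_0^*, A_1^* \subseteq \{0,1\}^{d-1}$. Both $A_0^*$ and $A_1^*$ have diameter at most $k$; moreover, for any $u \in A_0^*$ and $v \in A_1^*$, we have $d_{H}(u,v) + 1 \leq k$, so $A_0^* \cap A_1^*$ has diameter at most $k-1$. The role of the shifted property is to force enough compatibility between $A_0^*$ and $A_1^*$ so that the inductive bounds $|A_0^*| \leq |B_k^{(d-1)}|$ and $|A_1^*| \leq |B_{k-1}^{(d-1)}|$, combined with Pascal-type identities among the binomial coefficients defining $|B_k^{(d)}|$, close the recursion to $|A| \leq |B_k^{(d)}|$ in both the even and odd parity cases.

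The main obstacle is the diameter-preservation check for the shifting step, together with the tight bookkeeping needed to close the induction: without the structural constraints forced by the shifted property, the naive sum $|B_k^{(d-1)}| + |B_{k-1}^{(d-1)}|$ overshoots the target by a binomial factor, so one must use the compatibility of $A_0^*$ and $A_1^*$ coming from shifting to avoid the overshoot. An alternative route that sidesteps this is Katona's cyclic method, where one averages a local counting identity over all cyclic orderings of the coordinates to obtain the bound by double counting. Finally, tightness follows from direct inspection of $B_k^{(d)}$: when $k = 2t$, the Hamming ball $\binom{[d]}{\leq t}$ has pairwise symmetric differences of weight at most $2t$; when $k = 2t+1$, the set $\{0,1\} \square \binom{[d-1]}{\leq t}$ has pairwise Hamming distances at most $1 + 2t = k$.
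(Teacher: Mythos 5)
First, a point of reference: the paper does not prove this statement at all --- it is quoted as Kleitman's theorem from \cite{1966Kleitman} and used as a black box (together with its stability version, Theorem 2.3), so there is no in-paper proof to compare against. Judging your attempt on its own terms, it is an outline that correctly names the standard toolbox (compression plus induction on $d$, or tightness by direct inspection of $B_k^{(d)}$, which you do verify correctly), but it omits the one step that makes Kleitman's theorem genuinely hard, and that step does not follow from what you have set up.

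Concretely: splitting on the last coordinate and using the cross-distance observation $d_H(u,v)\le k-1$ for $u\in A_0^*$, $v\in A_1^*$, the best you can extract from the inductive hypothesis is $|A|=|A_0^*\cup A_1^*|+|A_0^*\cap A_1^*|\le |B_k^{(d-1)}|+|B_{k-1}^{(d-1)}|$, and a direct computation with Pascal's rule shows this exceeds $|B_k^{(d)}|$ by exactly $\binom{d-2}{t-1}$ when $k=2t$ and by $\binom{d-2}{t}$ when $k=2t+1$. You acknowledge this overshoot, but the claim that ``the shifted property forces enough compatibility \ldots to close the recursion'' is precisely the content of Kleitman's theorem and is nowhere justified; left-shiftedness alone does not obviously recover a deficit of a full binomial coefficient, and the known proofs (Kleitman's original pairing argument, Frankl's compression proof, or the linear-algebra proof of Gao--Liu--Xu cited in the paper) all require a substantive additional idea here. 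There is also a structural flaw in your first stage: by declaring a shift $S_{ij}$ admissible only when it does not increase any pairwise distance above $k$, the process terminates when no \emph{admissible} shift remains, which is weaker than the family being shifted --- it could simply be stuck. (In fact one can check that $S_{ij}$ preserves the diameter unconditionally, because when $G$ is blocked by $S_{ij}(G)\in A$ one has $|S_{ij}(F)\triangle G|=|F\triangle S_{ij}(G)|\le k$; but you neither prove this nor notice that your conditional formulation would otherwise invalidate the conclusion.) Finally, the appeal to ``Katona's cyclic method'' as a fallback is not substantiated; that technique is tailored to intersecting-family statements and is not a known route to the isodiametric inequality. As it stands, the proposal is a plan with the decisive lemma missing.
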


There is a stability result for Kleitman's theorem proved by Frankl~\cite{2017CPCFrankl} which states that if a set $A$ of vectors in $\{0,1\}^{d}$, with pairwise Hamming distances bounded by $k$, is not contained in any copy of the configuration $B_{k}^{(d)}$, then the size of the set $A$ is much smaller than the optimal value $|B_{k}^{(d)}|$. This result can be also proved via a robust linear algebra method; see the recent work of Gao, Liu, and Xu~\cite{2022Stability}.

\begin{theorem}[\cite{2017CPCFrankl}]\label{thm:StabilityFrankl}
     For integers $d\ge k+2$ and a set $A$ of vectors in $\{0,1\}^{d}$ with $d_{H}(\boldsymbol{u},\boldsymbol{v})\le k$ for any vectors $\boldsymbol{u},\boldsymbol{v}\in A$,
     \begin{enumerate}
         \item If $k=2t$ and $A$ is not contained in any $d$-dimensional Hamming ball of radius $t$, then 
         \begin{equation}\label{equ:EvenStability}
             |A|\leqslant \sum_{i=0}^{t}\binom{d}{i}-\binom{d-t-1}{t}+1.
         \end{equation}
         \item If $k=2t+1$ and $A$ is not contained in any Cartesian product of $\{0,1\}$ and an $(d-1)$-dimensional Hamming ball of radius $t$, then
         \begin{equation}\label{equ:OddStability}
             |A| \le 2\sum_{i=0}^{t}\binom{d-1}{i} - \binom{d-t-2}{t}+1.
         \end{equation}
     \end{enumerate}
\end{theorem}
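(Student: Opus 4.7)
The plan is to prove both parts by Frankl-style shifting combined with induction on the dimension $d$, with the odd case reducing to the even case. The key quantities to track are the defect terms $\binom{d-t-1}{t}-1$ and $\binom{d-t-2}{t}-1$, which quantify the drop in $|A|$ once $A$ fails to fit inside a canonical extremal copy of $B_{k}^{(d)}$.

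First I would apply the compression operators $S_{ij}$: for each pair of coordinates $i<j$, replace any $v \in A$ with $v_i = 1, v_j = 0$ by the vector obtained by swapping these two positions, whenever the swap does not collide with an existing element of $A$. These operators preserve $|A|$ and the diameter constraint $d_H(u,v) \leq k$, and after finitely many iterations $A$ becomes left-compressed. A subtle point is that compression may turn a non-ball-contained set into a ball-contained one; this is handled by arguing that an ``un-shifted'' witness vector of sufficiently high weight must persist, forcing the claimed bound back on the original set.

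For the even case $k=2t$, partition the compressed $A$ by the value of the last coordinate as $A = A_0 \cup A_1$, and let $A_0', A_1' \subseteq \{0,1\}^{d-1}$ be their projections. The diameter constraint on $A$ implies that $A_0' \cup A_1'$ has diameter at most $2t$ in $\{0,1\}^{d-1}$, and moreover any $u \in A_0'$ and $v \in A_1'$ satisfy $d_H(u,v) \leq 2t-1$ because one unit of difference is already spent on the last coordinate. If both $A_0'$ and $A_1'$ lie in a common Hamming ball of radius $t$ centered at the same point, then $A$ itself sits in a ball of radius $t$ in $\{0,1\}^d$, contradicting the hypothesis; otherwise, at least one projection violates the ball-containment condition in dimension $d-1$, and the inductive hypothesis delivers the stability loss, which propagates via Pascal's rule $\binom{d-t-1}{t} = \binom{d-t-2}{t} + \binom{d-t-2}{t-1}$. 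The odd case $k=2t+1$ is handled analogously by partitioning on the first coordinate and invoking the even-case bound on the union of the two projections, using the hypothesis that $A$ is not contained in $\{0,1\} \square B_{2t}^{(d-1)}$.

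The main obstacle will be the careful bookkeeping in the induction step: one must verify that the defect propagates \emph{exactly} as $\binom{d-t-1}{t}-1$ rather than some lossier quantity, and one must rule out anomalous ``almost-extremal'' configurations that might otherwise saturate only a weaker inequality. This hinges on a sharp case analysis of configurations of the form ``a Hamming ball with one element removed plus one outside vector of weight $t+1$ added'' together with direct verification of the base cases $d = k+2$, where the claimed bound must be checked against a combinatorial enumeration.
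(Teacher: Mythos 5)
A preliminary remark: the paper does not prove this statement at all. It is quoted verbatim from Frankl's paper \cite{2017CPCFrankl} (with a pointer to a second, linear-algebraic proof by Gao, Liu, and Xu), so there is no internal argument to compare your sketch against; what follows is an assessment of your proposal on its own terms.

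Your outline (shifting to a left-compressed family, then induction on $d$ by splitting on a coordinate) is the natural combinatorial route, and the routine parts are fine: the $(i,j)$-shifts do preserve the diameter condition, and the cross-distance observation $d_H(\boldsymbol{u},\boldsymbol{v})\le 2t-1$ for $\boldsymbol{u}\in A_0'$, $\boldsymbol{v}\in A_1'$ is correct. But the two places you flag as ``subtle'' are precisely where the proof lives, and neither is actually supplied. (1) Compression can genuinely collapse a witness: take $A$ to be the ball $\binom{[d]}{\le t}$ with one weight-$t$ vector replaced by a suitable vector of weight $t+1$; this $A$ is not contained in any radius-$t$ ball, yet after shifting it can become exactly $\binom{[d]}{\le t}$, at which point your induction returns only Kleitman's bound $\sum_{i=0}^{t}\binom{d}{i}$ and not the improved one. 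The assertion that ``an un-shifted witness vector of sufficiently high weight must persist'' is false as stated, and repairing it (e.g., by tracking how much each shift can gain and analyzing the last shift that destroys the witness) is the main technical content of any stability-via-compression argument. (2) The defect bookkeeping does not close. The inductive hypothesis in dimension $d-1$ with diameter $2t$ yields a defect of $\binom{(d-1)-t-1}{t}=\binom{d-t-2}{t}$, whereas the target is $\binom{d-t-1}{t}=\binom{d-t-2}{t}+\binom{d-t-2}{t-1}$; you must therefore extract an additional loss of $\binom{d-t-2}{t-1}-1$ from the other half of the partition (or from the interaction between $A_0'$ and $A_1'$), and nothing in the sketch produces it. Until both of these are made precise --- together with the base case $d=k+2$, where $\binom{d-t-1}{t}=\binom{t+1}{t}$ and the claim already requires a nontrivial check --- the proposal is a plan rather than a proof.
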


\section{Improved upper bounds on $n(k,d)$}\label{sec:General}

Throughout the section, let $U=\{\boldsymbol{u}_{1},\boldsymbol{u}_{2},\ldots,\boldsymbol{u}_{n}\}$ be a set of vectors in $\{0,1,*\}^{d}$ such that the Hamming distance between $\boldsymbol{u}_{i}$ and $\boldsymbol{u}_{j}$ belongs to $\{1,2,\ldots,k\}$ for any distinct $\boldsymbol{u}_{i},\boldsymbol{u}_{j}\in U$. In view of Proposition~\ref{prop:Equivalence}, $n(k,d)$ is the maximum size of such $U$, thus it suffices to study the upper bound on $|U|$.

For a vector $\boldsymbol{v}\in\{0,1\}^{d}$, we say $\boldsymbol{v}$ is \emph{covered by} a vector $\boldsymbol{u}\in U$ (equivalently, $\boldsymbol{u}$ \emph{covers} $\boldsymbol{v}$) if the coordinates of $\boldsymbol{v}$ and $\boldsymbol{u}$ only differ in the joker-positions of $\boldsymbol{u}$. 

\begin{claim}\label{3.1}
    For each vector $\boldsymbol{v}\in\{0,1\}^{d}$, there is at most one vector in $U$ covering $\boldsymbol{v}$.  
  \end{claim}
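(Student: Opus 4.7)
The plan is to argue by contradiction. Suppose toward a contradiction that some $\boldsymbol{v}\in\{0,1\}^{d}$ is covered by two distinct vectors $\boldsymbol{u}_{1},\boldsymbol{u}_{2}\in U$. First I would unpack the definition of covering: by hypothesis, at every non-joker coordinate $i$ of $\boldsymbol{u}_{1}$, we have $(\boldsymbol{u}_{1})_{i}=v_{i}$, and similarly $(\boldsymbol{u}_{2})_{j}=v_{j}$ at every non-joker coordinate $j$ of $\boldsymbol{u}_{2}$. Therefore, at any position $i\in[d]$ where both $(\boldsymbol{u}_{1})_{i}$ and $(\boldsymbol{u}_{2})_{i}$ lie in $\{0,1\}$, both entries equal $v_{i}$, so $\boldsymbol{u}_{1}$ and $\boldsymbol{u}_{2}$ agree there.

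Next I would invoke the definition of Hamming distance on $\{0,1,*\}^{d}$ recalled in Section~\ref{sec:prelim}: $d_{H}(\boldsymbol{u}_{1},\boldsymbol{u}_{2})$ counts only the positions at which both entries are non-joker and disagree. The observation above says there are no such positions, so $d_{H}(\boldsymbol{u}_{1},\boldsymbol{u}_{2})=0$. This contradicts the defining property of $U$, which requires $d_{H}(\boldsymbol{u}_{1},\boldsymbol{u}_{2})\in\{1,2,\ldots,k\}$ for any two distinct elements of $U$. Hence no two distinct elements of $U$ can simultaneously cover $\boldsymbol{v}$.

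There is no real obstacle here: the claim is essentially a direct consequence of the definitions combined with the hypothesis on $U$. The only subtlety worth flagging in the write-up is that two distinct elements of $\{0,1,*\}^{d}$ may have Hamming distance $0$ in general (if they differ only by some coordinate being a joker in one and a $0$ or $1$ in the other); the hypothesis on $U$ excludes exactly this situation, which is what makes the covering relation well-defined as a function from $\{0,1\}^{d}$ to $U\cup\{\text{uncovered}\}$ and what will power the weighted counting argument later in Section~\ref{sec:General}.
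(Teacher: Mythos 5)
Your proof is correct and follows essentially the same route as the paper's: both arguments note that two vectors of $U$ covering the same $\boldsymbol{v}\in\{0,1\}^{d}$ must agree on every mutually non-joker coordinate, forcing their Hamming distance to be $0$ and contradicting the requirement that distinct elements of $U$ have Hamming distance at least $1$. Your closing remark about why distinctness alone does not suffice without the hypothesis on $U$ is a nice clarification but not a substantive departure.
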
 
  \begin{poc}
    Suppose there are two distinct $\boldsymbol{u}$ and $\boldsymbol{u'}$ in $U$ covering a vector $\boldsymbol{v}\in\{0,1\}^{d}$, by the definition of $U$, there is at least one coordinate of $\boldsymbol{u}$ and $\boldsymbol{u'}$ in non-joker position being different. However, as both of $\boldsymbol{u}$ and $\boldsymbol{u'}$ cover the vector $\boldsymbol{v}$, all of the coordinates of $\boldsymbol{u}$ and $\boldsymbol{u'}$ in non-joker positions should be the same, a contradiction.  
  \end{poc}

For a vector $\boldsymbol{v}=(v_{1},v_{2},\ldots,v_{d})\in\{0,1\}^{d}$, let $\overline{\boldsymbol{v}}:=\boldsymbol{1}-\boldsymbol{v}=(1-v_{1},1-v_{2},\ldots,1-v_{d})$. Let
\begin{equation*}
    V^{(t)}:=\{\boldsymbol{v}\in\{0,1\}^{d}:\ \boldsymbol{v}\ \text{is covered by some\ } \boldsymbol{u}\in U\ \text{with exactly $t$ jokers} \},
\end{equation*}
and
\begin{equation*}
    \overline{V^{(t)}}:=\{\overline{\boldsymbol{v}}\in\{0,1\}^{d}:\ \boldsymbol{v}\ \text{is covered by some\ } \boldsymbol{u}\in U\ \text{with exactly $t$ jokers} \}.
\end{equation*}
It is obvious that $|V^{(t)}|=| \overline{V^{(t)}}|$. The following claim is useful.

\begin{claim} \label{claim:disjoint}
For any $0\le i<\frac{d-k}{2}$, $V^{(0)},\overline{V^{(0)}}, V^{(1)},\overline{V^{(1)}},\ldots, V^{(i)},\overline{V^{(i)}}$ are pairwise disjoint.
\end{claim}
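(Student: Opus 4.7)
The plan is to split the pairwise disjointness into two types of intersections: those between sets of the same flavor, namely $V^{(s)}\cap V^{(t)}$ or $\overline{V^{(s)}}\cap \overline{V^{(t)}}$ with $s\ne t$, and the cross intersections $V^{(s)}\cap \overline{V^{(t)}}$ for any $0\le s,t\le i$. The same-flavor case is immediate from Claim~\ref{3.1}: if some $\boldsymbol{v}$ were covered both by a vector in $U$ with $s$ jokers and by one with $t$ jokers and $s\ne t$, these two covering vectors would differ in their numbers of jokers and hence be distinct, contradicting the uniqueness of the cover. The barred-flavor case reduces to this after applying the complement bijection $\boldsymbol{v}\mapsto \overline{\boldsymbol{v}}$, which sends $\overline{V^{(s)}}$ bijectively to $V^{(s)}$.

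The cross case is where the hypothesis $i<(d-k)/2$ is used. Suppose some $\boldsymbol{v}$ lies in $V^{(s)}\cap \overline{V^{(t)}}$, so there exist $\boldsymbol{u},\boldsymbol{u}'\in U$ with $s$ and $t$ jokers respectively, such that $\boldsymbol{u}$ covers $\boldsymbol{v}$ and $\boldsymbol{u}'$ covers $\overline{\boldsymbol{v}}$. Let $J,J'\subseteq[d]$ denote their joker-positions. On every coordinate in $[d]\setminus(J\cup J')$, both vectors have non-joker entries; $\boldsymbol{u}$ must agree with $\boldsymbol{v}$ there while $\boldsymbol{u}'$ must agree with $\overline{\boldsymbol{v}}$, so $\boldsymbol{u}$ and $\boldsymbol{u}'$ disagree on all $|[d]\setminus(J\cup J')|\ge d-s-t$ such positions, giving $d_H(\boldsymbol{u},\boldsymbol{u}')\ge d-s-t$. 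Since $s+t\le 2i<d-k$, this forces $d_H(\boldsymbol{u},\boldsymbol{u}')>k$ whenever $\boldsymbol{u}\ne\boldsymbol{u}'$, contradicting the diameter-$k$ requirement built into the definition of $U$. The remaining possibility $\boldsymbol{u}=\boldsymbol{u}'$ would require a single vector to cover both $\boldsymbol{v}$ and $\overline{\boldsymbol{v}}$, forcing every coordinate to be a joker and $s=d>i$, again a contradiction.

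The only mildly delicate point is the inclusion bound $|[d]\setminus(J\cup J')|\ge d-s-t$, a one-line computation via $|J\cup J'|\le|J|+|J'|$. The role of the strict inequality $i<(d-k)/2$ is precisely to convert $s+t\le 2i$ into the strict estimate $d-s-t>k$ needed to violate the diameter constraint on $U$; beyond this, I anticipate no further obstacles.
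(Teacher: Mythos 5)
Your proposal is correct and follows essentially the same route as the paper: the core of the argument is the identical distance estimate $d_H(\boldsymbol{u},\boldsymbol{u}')\ge d-s-t\ge d-2i>k$ contradicting the diameter-$k$ condition on $U$. You are slightly more thorough than the paper in explicitly dispatching the same-flavor intersections via Claim~\ref{3.1} and the degenerate case $\boldsymbol{u}=\boldsymbol{u}'$, both of which the paper leaves implicit.
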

\begin{poc}
Suppose for some 
integers $0\le a,b\le i$, $V^{(a)}\cap\overline{V^{(b)}}\neq\emptyset$, then there is some vector $\boldsymbol{v}\in V^{(a)}\cap\overline{V^{(b)}}$. Since $\boldsymbol{v}\in V^{(a)}$, $\boldsymbol{v}$ is covered by some vector $\boldsymbol{u}_{1}\in U$ with $a$ jokers. Moreover, $\overline{\boldsymbol{v}}$ is covered by some vector $\boldsymbol{u}_{2}\in U$ with $b$ jokers. As $d_{H}(\boldsymbol{v},\overline{\boldsymbol{v}})=d$, we have $d_{H}(\boldsymbol{u}_{1},\boldsymbol{u}_{2})\ge d-a-b\ge d-2i>k$ as $i<\frac{d-k}{2}$, which contradicts to the assumption that $d_{H}(\boldsymbol{u}_{1},\boldsymbol{u}_{2})\le k$. Moreover, by Claim~\ref{3.1}, any vector $\boldsymbol{v}\in \{0,1\}^{d}$ cannot be in both of $V^{(a)}$ and $V^{(b)}$ ($\overline{V^{(a)}}$ and $\overline{V^{(b)}}$, respectively) for distinct $0\le a,b\le i$.
 The proof is finished.
\end{poc}  

The following claim (also independently observed in~\cite{2022neighbor}) is also crucial in our proof. We provide a simple proof for the sake of completeness.

\begin{claim}\label{claim:UpperBoundSum}
  For any $0\le i<\frac{d-k}{2}$ and any two distinct vectors $\boldsymbol{a},\boldsymbol{b}\in\bigcup_{s=0}^{i}V^{(s)}$, $d_{H}(\boldsymbol{a},\boldsymbol{b})\le k+2i$. As a consequence, we have
  \begin{equation}
      \sum_{s=0}^{i}|V^{(s)}|\le |B_{k+2i}^{(d)}|.
  \end{equation}
\end{claim}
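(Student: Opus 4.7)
The plan is to prove the two assertions in sequence, starting with the Hamming distance bound. Fix distinct $\boldsymbol{a},\boldsymbol{b}\in\bigcup_{s=0}^{i}V^{(s)}$. By definition, there exist $\boldsymbol{u}_a,\boldsymbol{u}_b\in U$ with $s_a,s_b\le i$ jokers, respectively, such that $\boldsymbol{u}_a$ covers $\boldsymbol{a}$ and $\boldsymbol{u}_b$ covers $\boldsymbol{b}$. I would split the $d$ coordinates into two groups: the set $J$ of positions where at least one of $\boldsymbol{u}_a,\boldsymbol{u}_b$ is a joker, and its complement $[d]\setminus J$ of positions where both are non-joker. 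Clearly $|J|\le s_a+s_b\le 2i$.

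On coordinates in $[d]\setminus J$, the entries of $\boldsymbol{a}$ and $\boldsymbol{b}$ are forced to equal the corresponding entries of $\boldsymbol{u}_a$ and $\boldsymbol{u}_b$, so the number of positions in $[d]\setminus J$ where $\boldsymbol{a}$ and $\boldsymbol{b}$ differ equals the number of non-joker positions in both $\boldsymbol{u}_a$ and $\boldsymbol{u}_b$ where they differ, which is precisely $d_H(\boldsymbol{u}_a,\boldsymbol{u}_b)\le k$ (the joker-free Hamming distance). On coordinates in $J$, the contribution to $d_H(\boldsymbol{a},\boldsymbol{b})$ is at most $|J|\le 2i$. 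Summing gives $d_H(\boldsymbol{a},\boldsymbol{b})\le k+2i$, as required.

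For the consequence, I would first observe that the sets $V^{(0)},V^{(1)},\ldots,V^{(i)}$ are pairwise disjoint: if some $\boldsymbol{v}\in V^{(s_1)}\cap V^{(s_2)}$ existed with $s_1\ne s_2$, then $\boldsymbol{v}$ would be covered by two distinct elements of $U$ (one with $s_1$ jokers and one with $s_2$ jokers), contradicting Claim~\ref{3.1}. Hence $|A|=\sum_{s=0}^{i}|V^{(s)}|$ where $A:=\bigcup_{s=0}^{i}V^{(s)}\subseteq\{0,1\}^d$. Since $i<\frac{d-k}{2}$, we have $k+2i<d$, so Kleitman's theorem (Theorem~\ref{thm:Kleitman}) applies to $A$ with parameter $k+2i$ and yields $|A|\le |B_{k+2i}^{(d)}|$.

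The argument is essentially mechanical; there is no real obstacle. The only points requiring a bit of care are (i) correctly interpreting $d_H(\boldsymbol{u}_a,\boldsymbol{u}_b)$ as counting only positions where both are non-joker when invoking the hypothesis $d_H(\boldsymbol{u}_a,\boldsymbol{u}_b)\le k$, and (ii) verifying that the hypothesis $i<\frac{d-k}{2}$ gives precisely $k+2i<d$, which is what is needed to apply Kleitman's bound in the stated form.
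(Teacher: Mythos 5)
Your proposal is correct and follows essentially the same route as the paper: bound the coordinates where $\boldsymbol{a},\boldsymbol{b}$ can differ by $d_H(\boldsymbol{u}_a,\boldsymbol{u}_b)\le k$ plus the at most $2i$ joker positions, then invoke Kleitman's theorem. The only difference is that you spell out the coordinate split and the pairwise disjointness of the $V^{(s)}$ (which the paper records separately in Claim~\ref{claim:disjoint} and leaves implicit here); both details are correct and needed.
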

\begin{poc}
    As $\boldsymbol{a}$ and $\boldsymbol{b}$ are covered by some vectors $\boldsymbol{u}_{1}$ and $\boldsymbol{u}_{2}$ in $U$ with at most $i$ jokers, and $d_{H}(\boldsymbol{u}_{1},\boldsymbol{u}_{2})\le k$, we have $d_{H}(\boldsymbol{a},\boldsymbol{b})\le k+2i$. The claim follows from Theorem~\ref{thm:Kleitman}.
\end{poc}
\subsection{Proof of Theorem~\ref{thm:main}}
  
Equip each vector $\boldsymbol{v}\in\{0,1\}^{d}$ with a weight function $f(\boldsymbol{v})$. If $\boldsymbol{v}$ is covered by some vector $\boldsymbol{u}\in U$ (note that such a vector $\boldsymbol{u}$ is unique by Claim~\ref{3.1}) and $\boldsymbol{u}$ has exactly $t$ jokers, then we define $f(\boldsymbol{v})=\frac{1}{2^{t}}$. Otherwise, we define $f(\boldsymbol{v})=0$.

By definition of the function $f(\boldsymbol{v})$, we have
\begin{equation*}
    \sum_{\boldsymbol{v}\in\{0,1\}^{d}}f(\boldsymbol{v})=|U|.
\end{equation*}
 The above formula holds because each vector $\boldsymbol{u}\in U$ with $t$ jokers covers exactly $2^{t}$ vectors in $\{0,1\}^{d}$, and every binary vector $\boldsymbol{v}$ can be covered by at most one vector in $U$, which is shown in~Claim~\ref{3.1}. Therefore, it suffices to analyze the values of the function $f(\boldsymbol{v})$ for every $\boldsymbol{v}\in\{0,1\}^{d}$. 

 \begin{claim}\label{claim:overline}
For a vector $\boldsymbol{v}\in\overline{V^{(i)}}$ with $0\le i<\frac{d-k}{2}$, we have $f(\boldsymbol{v})\le\frac{1}{2^{d-k-i}}$.     
 \end{claim}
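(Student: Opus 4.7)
The plan is to show the bound by comparing the two ``witness'' vectors in $U$: one covering $\boldsymbol{v}$ itself and one (guaranteed by the hypothesis $\boldsymbol{v}\in\overline{V^{(i)}}$) covering its complement $\overline{\boldsymbol{v}}$. The whole point of the Hamming-distance constraint in $U$ is that these two witnesses must be close, but the fact that they cover antipodal $0/1$-vectors forces them to disagree in many non-joker positions, and this can only be reconciled if the witness covering $\boldsymbol{v}$ has a lot of jokers.

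Concretely, I would start by unpacking the definitions. The hypothesis gives some $\boldsymbol{u}\in U$ with exactly $i$ jokers covering $\overline{\boldsymbol{v}}$. If no vector in $U$ covers $\boldsymbol{v}$, then $f(\boldsymbol{v})=0$ and the inequality is trivial, so assume there exists $\boldsymbol{u}'\in U$ with $t$ jokers covering $\boldsymbol{v}$; the goal is to prove $t\ge d-k-i$, which immediately yields $f(\boldsymbol{v})=1/2^{t}\le 1/2^{d-k-i}$. First I would note that $\boldsymbol{u}\ne\boldsymbol{u}'$: if they coincided, the single vector would cover both $\boldsymbol{v}$ and $\overline{\boldsymbol{v}}$, which (since $\boldsymbol{v}$ and $\overline{\boldsymbol{v}}$ differ in every coordinate) forces every coordinate to be a joker, contradicting $i<(d-k)/2\le d$.

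Next, let $J$ and $J'$ be the joker sets of $\boldsymbol{u}$ and $\boldsymbol{u}'$, so $|J|=i$ and $|J'|=t$. For any position $j\in[d]\setminus(J\cup J')$, the coordinate $\boldsymbol{u}_{j}$ equals $1-v_{j}$ (since $\boldsymbol{u}$ covers $\overline{\boldsymbol{v}}$) while $\boldsymbol{u}'_{j}$ equals $v_{j}$, so $\boldsymbol{u}$ and $\boldsymbol{u}'$ disagree at $j$. Hence
\[
d_{H}(\boldsymbol{u},\boldsymbol{u}')\ge |[d]\setminus(J\cup J')|\ge d-i-t.
\]
On the other hand, since $\boldsymbol{u},\boldsymbol{u}'$ are distinct elements of $U$, we have $d_{H}(\boldsymbol{u},\boldsymbol{u}')\le k$. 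Combining the two inequalities gives $t\ge d-k-i$, which finishes the proof.

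There is no real obstacle here: the argument is a short counting step once the witnesses are named. The only point that requires a moment of care is ruling out the degenerate case $\boldsymbol{u}=\boldsymbol{u}'$, where the hypothesis $i<(d-k)/2$ is what saves us.
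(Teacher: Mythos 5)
Your proof is correct and follows essentially the same route as the paper: name the witness covering $\boldsymbol{v}$ and the one covering $\overline{\boldsymbol{v}}$, observe they must disagree at every position outside their (at most $i+t$) joker coordinates, and combine $d_{H}\ge d-i-t$ with $d_{H}\le k$ to get $t\ge d-k-i$. Your explicit handling of the degenerate case $\boldsymbol{u}=\boldsymbol{u}'$ is a small extra care the paper omits, but it changes nothing of substance.
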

 \begin{poc}
 If $\boldsymbol{v}\in\overline{V^{(i)}}$ is not covered by any vector in $U$, then $f(\boldsymbol{v})=0$. Otherwise $\boldsymbol{v}$ is covered by some vector $\boldsymbol{u}\in U$ with $j$ jokers. And $\overline{\boldsymbol{v}}$ is covered by some vector $\boldsymbol{w}\in U$ with $i$ jokers, as $d_{H}(\boldsymbol{v},\overline{\boldsymbol{v}})=d$. So we have $d-j-i\le d_{H}(\boldsymbol{u},\boldsymbol{w})\le k$, which gives $j\ge d-k-i$. Therefore  $f(\boldsymbol{v})\le\frac{1}{2^{d-k-i}}$.    
 \end{poc}

\begin{claim}\label{claim:counting}
For any integer $0\le i<\frac{d-k-1}{2}$, let $\boldsymbol{v}\in \{0,1\}^{d}\setminus (V^{(0)}\cup \overline{V^{(0)}}\cup V^{(1)}\cup\overline{V^{(1)}}\cup\cdots\cup V^{(i)}\cup\overline{V^{(i)}})$, we have
    \begin{equation*}
        f(\boldsymbol{v})+f(\overline{\boldsymbol{v}})\le \frac{1}{2^{i+1}}+\frac{1}{2^{d-k-i-1}}.
    \end{equation*}
When $d-k$ is odd and $i=\frac{d-k-1}{2}$, we have
\begin{equation*}
        f(\boldsymbol{v})+f(\overline{\boldsymbol{v}})\le \frac{1}{2^{i}}.
    \end{equation*}

\end{claim}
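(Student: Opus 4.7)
The plan is to combine the exclusion hypothesis with the Hamming-distance constraint on $U$, much in the same spirit as Claims~\ref{claim:disjoint} and~\ref{claim:overline}. First I would unpack the hypothesis: since $\boldsymbol{v}\notin V^{(s)}$ for every $0\le s\le i$, either $\boldsymbol{v}$ is covered by no vector of $U$ (so $f(\boldsymbol{v})=0$) or $\boldsymbol{v}$ is covered by some $\boldsymbol{u}\in U$ with $j\ge i+1$ jokers. Applying the same observation to $\overline{\boldsymbol{v}}$, using $\boldsymbol{v}\notin \overline{V^{(s)}}$ for $s\le i$, gives that $\overline{\boldsymbol{v}}$ is either uncovered or covered by some $\boldsymbol{w}\in U$ with $j'\ge i+1$ jokers. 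In the degenerate case where one of $\boldsymbol{v},\overline{\boldsymbol{v}}$ is uncovered, the bound $f(\boldsymbol{v})+f(\overline{\boldsymbol{v}})\le 1/2^{i+1}$ is strictly stronger than either claimed bound, so I may assume both are covered.

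The main step is then a short calculation on the pair $(j,j')$. At every coordinate where neither $\boldsymbol{u}$ nor $\boldsymbol{w}$ carries a joker, we have $u_\ell=v_\ell$ and $w_\ell=1-v_\ell$, so $\boldsymbol{u}$ and $\boldsymbol{w}$ disagree there. Since jokers in $\boldsymbol{u}$ and $\boldsymbol{w}$ together mask at most $j+j'$ of the $d$ coordinates of $\boldsymbol{v}$ and $\overline{\boldsymbol{v}}$, this yields $d_H(\boldsymbol{u},\boldsymbol{w})\ge d-j-j'$. Combining with the defining hypothesis $d_H(\boldsymbol{u},\boldsymbol{w})\le k$ for $U$, I obtain $j+j'\ge d-k$, in addition to $j,j'\ge i+1$.

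The bound on $f(\boldsymbol{v})+f(\overline{\boldsymbol{v}})=2^{-j}+2^{-j'}$ then follows by elementary optimization. In the first regime $0\le i<\tfrac{d-k-1}{2}$, we have $d-k-i-1\ge i+1$, so the constraint $j+j'\ge d-k$ is compatible with $j=i+1$; since $j\mapsto 2^{-j}+2^{-(d-k-j)}$ is convex on $[i+1,d-k-i-1]$ and symmetric about $(d-k)/2$, its maximum is attained at the endpoints $(j,j')=(i+1,d-k-i-1)$, producing $1/2^{i+1}+1/2^{d-k-i-1}$. In the second regime $d-k$ odd and $i=\tfrac{d-k-1}{2}$, we have $2(i+1)=d-k+1>d-k$, so the lower bounds $j,j'\ge i+1$ alone force $j=j'=i+1$, yielding $f(\boldsymbol{v})+f(\overline{\boldsymbol{v}})\le 2\cdot 2^{-(i+1)}=2^{-i}$.

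I do not expect a real obstacle: the argument is a clean case analysis and a one-variable optimization, and no new tools beyond Claim~\ref{3.1} and the definition of the Hamming distance on $\{0,1,*\}^d$ are needed. The only point worth verifying carefully is the inequality $d_H(\boldsymbol{u},\boldsymbol{w})\ge d-j-j'$, which follows because each of the $d$ coordinates where $\boldsymbol{v}$ and $\overline{\boldsymbol{v}}$ disagree contributes to $d_H(\boldsymbol{u},\boldsymbol{w})$ unless it is a joker position of $\boldsymbol{u}$ or $\boldsymbol{w}$.
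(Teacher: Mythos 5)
Your proposal is correct and follows essentially the same route as the paper: both reduce to the case where $\boldsymbol{v}$ and $\overline{\boldsymbol{v}}$ are each covered (the uncovered cases giving the stronger bound $1/2^{i+1}$), derive $j,j'\ge i+1$ from the exclusion hypothesis and $j+j'\ge d-k$ from $d_H(\boldsymbol{u},\boldsymbol{w})\ge d-j-j'$ together with $d_H(\boldsymbol{u},\boldsymbol{w})\le k$, and finish by the same elementary optimization of $2^{-j}+2^{-j'}$. Your write-up merely makes explicit the convexity argument and the coordinate count that the paper leaves as ``simple optimization.''
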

\begin{poc}
We consider the following two cases.
\begin{enumerate}[(i)]
    \item If $\boldsymbol{v}$ and $\overline{\boldsymbol{v}}$ are covered by some two vectors in $U$, say $\boldsymbol{u}_{i}$ and $\boldsymbol{u}_{j}$, as $d_{H}(\boldsymbol{v},\overline{\boldsymbol{v}})=d$ and $d_{H}(\boldsymbol{u}_{i},\boldsymbol{u}_{j})\le k$, the sum of the number of jokers in $\boldsymbol{u}_{i}$ and $\boldsymbol{u}_{j}$ is at least $d-k$. We can see $f(\boldsymbol{v})=\frac{1}{2^{x}}$ and $f(\overline{\boldsymbol{v}})=\frac{1}{2^{y}}$ subject to $x\ge i+1$, $y\ge i+1$ and $x+y\ge d-k$. Then we have $f(\boldsymbol{v})+f(\overline{\boldsymbol{v}})\le \frac{1}{2^{i+1}}+\frac{1}{2^{d-k-i-1}}$ via simple optimization in the case $0 \leq i<\frac{d-k-1}{2}$. In the case $d-k$ is odd and $i=\frac{d-k-1}{2}$, we instead have $f(\boldsymbol{v})+f(\overline{\boldsymbol{v}})\le \frac{1}{2^{i+1}}+\frac{1}{2^{d-k-i}}=\frac{1}{2^{i}}$.
    \item If at least one of $\boldsymbol{v}$ and $\overline{\boldsymbol{v}}$ is not covered by any vector in $U$, then at least one of $f(\boldsymbol{v})$ and $f(\overline{\boldsymbol{v}})$ is equal to $0$. Without loss of generality, we can assume $\boldsymbol{v}$ is not be covered by any vector in $U$, then by Claim~\ref{claim:disjoint}, $\overline{\boldsymbol{v}}$ cannot be in $V^{(0)}\cup \overline{V^{(0)}}\cup V^{(1)}\cup\overline{V^{(1)}}\cup\cdots\cup V^{(i)}\cup\overline{V^{(i)}}$, which implies $f(\boldsymbol{v})+f(\overline{\boldsymbol{v}})\le\frac{1}{2^{i+1}}$. 
\end{enumerate}
\end{poc}

Next, we present the proof of Theorem~\ref{thm:main}.

\begin{proof}[Proof of Theorem~\ref{thm:main}]
We first consider the case that $d-k$ is even. Based on Claim~\ref{claim:disjoint}, Claim~\ref{claim:overline} Claim~\ref{claim:counting}, the identities $\sum_{\boldsymbol{v}\in\{0,1\}^{d}}f(\boldsymbol{v})=|U|$ and $|V^{(i)}|=|\overline{V^{(i)}}|$, the following upper bound holds if $0\le i\le \frac{d-k-2}{2}$.
\begin{equation}\label{equ:Counting}
   |U|=\sum_{\boldsymbol{v}\in\{0,1\}^{d}}f(\boldsymbol{v})\le \sum\limits_{j=0}^{i}\bigg(\frac{1}{2^{j}}+\frac{1}{2^{d-k-j}}\bigg)\cdot|V^{(j)}|+\bigg(\frac{1}{2^{i+1}}+\frac{1}{2^{d-k-i-1}}\bigg)\cdot \bigg(2^{d-1}-\sum_{j=0}^{i}|V^{(j)}|\bigg).
\end{equation}
Rearranging the terms, we obtain
\begin{equation}
    |U|\le \sum\limits_{j=0}^{i}\bigg(\frac{1}{2^{j}}+\frac{1}{2^{d-k-j}}-\frac{1}{2^{i+1}}-\frac{1}{2^{d-k-i-1}}\bigg)\cdot|V^{(j)}|+2^{d-i-2}+2^{k+i}.
\end{equation}

Write $S_t=\sum_{j=0}^t |V^{(j)}|$ for each $t \geq 0$, and set $S_{-1}=0$. We deduce that
\begin{align*}
    |U|&\le \sum\limits_{j=0}^{i}\bigg(\frac{1}{2^{j}}+\frac{1}{2^{d-k-j}}-\frac{1}{2^{i+1}}-\frac{1}{2^{d-k-i-1}}\bigg)\cdot|V^{(j)}|+2^{d-i-2}+2^{k+i}\\
    &= \sum\limits_{j=0}^{i}\bigg(\frac{1}{2^{j}}+\frac{1}{2^{d-k-j}}-\frac{1}{2^{i+1}}-\frac{1}{2^{d-k-i-1}}\bigg)\cdot(S_j-S_{j-1})+2^{d-i-2}+2^{k+i}\\
    &=\sum\limits_{j=0}^{i-1}\bigg(\frac{1}{2^{j}}+\frac{1}{2^{d-k-j}}-\frac{1}{2^{j+1}}-\frac{1}{2^{d-k-j-1}}\bigg) S_j+\bigg(\frac{1}{2^{i+1}}-\frac{1}{2^{d-k-i}}\bigg)S_i+2^{d-i-2}+2^{k+i}\\
    &=\sum\limits_{j=0}^{i}\bigg(\frac{1}{2^{j+1}}-\frac{1}{2^{d-k-j}}\bigg) S_j+2^{d-i-2}+2^{k+i}.
\end{align*} 
Combining Claim~\ref{claim:UpperBoundSum} and Theorem~\ref{thm:Kleitman}, we have
\begin{equation}\label{equ:CollectLikeTerms}
    |U|\le\sum\limits_{j=0}^{i}\bigg(\frac{1}{2^{j+1}}-\frac{1}{2^{d-k-j}}\bigg) \big|B_{k+2j}^{(d)}\big|+2^{d-i-2}+2^{k+i}.
\end{equation}

Next, we optimize the choice of $i$.
For each integer $0\le i\le \frac{d-k-2}{2}$, let
 \begin{equation*} g(i):=\sum\limits_{j=0}^{i}\bigg(\frac{1}{2^{j+1}}-\frac{1}{2^{d-k-j}}\bigg) \big|B_{k+2j}^{(d)}\big|+2^{d-i-2}+2^{k+i}.
\end{equation*}
It follows that for each $0\le i\le \frac{d-k-4}{2}$, we have
\begin{equation*}
    g(i+1)-g(i)= \bigg(\frac{1}{2^{d-k-i-1}}-\frac{1}{2^{i+2}}\bigg)\cdot\bigg(2^{d-1}-|B_{k+2(i+1)}^{(d)}|\bigg)
\end{equation*}
with $\frac{1}{2^{d-k-i-1}}<\frac{1}{2^{i+2}}$ and $|B_{k+2(i+1)}^{(d)}|<2^{d-1}$ in view of equation~\eqref{Bkd} and the binomial theorem. Thus, $g(i)$ is monotonically decreasing for integer $0\le i<\frac{d-k-1}{2}$. Hence, when $d-k$ is even, we can take $i=\frac{d-k-2}{2}$ in equation~\eqref{equ:CollectLikeTerms} and obtain the desired upper bound.

Finally, we discuss the case that $d-k$ is odd. In this case, $g(i)$ remains an upper bound for $|U|$ for each $0 \leq i \leq \frac{d-k-3}{2}$. Moreover, when $i=\frac{d-k-1}{2}$, Claim~\ref{claim:counting} indicates that 
$$
    |U| \leq \sum\limits_{j=0}^{i}\bigg(\frac{1}{2^{j}}+\frac{1}{2^{d-k-j}}-\frac{1}{2^i}\bigg)\cdot|V^{(j)}|+2^{\frac{d+k-1}{2}},
$$
and thus we have
\begin{equation}\label{equ:CollectLikeTerms2}
|U| \le \sum\limits_{j=0}^{i-1}\bigg(\frac{1}{2^{j+1}}-\frac{1}{2^{d-k-j}}\bigg) \big|B_{k+2j}^{(d)}\big|+\frac{1}{2^{d-k-i}}\big|B_{k+2i}^{(d)}\big|+2^{\frac{d+k-1}{2}},
\end{equation}
using a similar argument. Denote the above upper bound by $g(\frac{d-k-1}{2})$. In view of the discussion in the first case, it suffices to compare $g(\frac{d-k-3}{2})$ with $g(\frac{d-k-1}{2})$. In view of  equation~\eqref{Bkd} and the binomial theorem, it follows that
$$
g\bigg(\frac{d-k-1}{2}\bigg)-g\bigg(\frac{d-k-3}{2}\bigg)=\frac{1}{2^{\frac{d-k+1}{2}}}\big|B_{d-1}^{(d)}\big|+2^{\frac{d+k-1}{2}}-3\cdot 2^{\frac{d+k-3}{2}}=\frac{1}{2^{\frac{d-k+1}{2}}}\big|B_{d-1}^{(d)}\big|-2^{\frac{d+k-3}{2}}\leq 0.
$$
Therefore, $g(\frac{d-k-1}{2})$ gives the desired upper bound.
\end{proof}

Theorem~\ref{thm:main} also provides the exact value of $n(d-1,d)$ as follows, which has been shown in~\cite[Proposition 1]{2022neighbor}. 
\begin{cor}\label{cor:d-1}
For every $d\ge 2$, $n(d-1,d)=3\cdot 2^{d-2}$. 
\end{cor}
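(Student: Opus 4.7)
The plan is to match an upper bound from Theorem~\ref{thm:main} with an explicit construction. Since $k=d-1$ forces $d-k=1$ to be odd, I would invoke the odd branch of Theorem~\ref{thm:main}. Here the index range $0\le j \le \tfrac{d-k-3}{2}=-1$ renders the summation empty, and the two surviving terms collapse to
\begin{equation*}
n(d-1,d)\le \tfrac{1}{2}\,|B_{d-1}^{(d)}| + 2^{d-1}.
\end{equation*}
Next I would evaluate $|B_{d-1}^{(d)}|$ using equation~\eqref{Bkd}, splitting on the parity of $d-1$. When $d$ is odd, writing $d-1=2t$ gives $|B_{d-1}^{(d)}|=\sum_{i=0}^{(d-1)/2}\binom{d}{i}$, which is exactly half of $2^d$ by the symmetry $\binom{d}{i}=\binom{d}{d-i}$. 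When $d$ is even, writing $d-1=2t+1$ gives $|B_{d-1}^{(d)}|=2\sum_{i=0}^{(d-2)/2}\binom{d-1}{i}=2\cdot 2^{d-2}$ by the same symmetry applied to $d-1$. In both cases $|B_{d-1}^{(d)}|=2^{d-1}$, and substitution yields the upper bound $n(d-1,d)\le 2^{d-2}+2^{d-1}=3\cdot 2^{d-2}$.

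For the matching lower bound, I would use Proposition~\ref{prop:Equivalence} and exhibit a family $U\subseteq\{0,1,*\}^d$ of the right size. My candidate is
\begin{equation*}
U=\bigl(\{0,1\}^{d-1}\times\{0\}\bigr) \,\cup\, \bigl(\{*\}\times\{0,1\}^{d-2}\times\{1\}\bigr),
\end{equation*}
whose two disjoint blocks contribute $2^{d-1}$ and $2^{d-2}$ vectors respectively, totaling $3\cdot 2^{d-2}$. The verification that all pairwise Hamming distances lie in $\{1,\ldots,d-1\}$ splits into three cases: two vectors from the first block share the last coordinate $0$ and so differ only among the first $d-1$ coordinates, giving distance between $1$ and $d-1$; two vectors from the second block share the joker in position $1$ and the entry $1$ in position $d$, so they differ only among the middle $d-2$ non-joker coordinates, giving distance between $1$ and $d-2$; and a cross pair always differs in the last coordinate and is compared on at most $d-2$ further coordinates (the first position contributes no comparison because one side is a joker), giving distance between $1$ and $d-1$. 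In all three cases the distance lies in the allowed range.

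The main obstacle in this approach is really locating the construction; once it is in hand, every other step is essentially arithmetic, as the upper bound is just a specialization of Theorem~\ref{thm:main} and the distance check is case-by-case verification. Alternatively, one may simply cite \cite[Proposition~1]{2022neighbor} for the lower bound construction as noted in the statement of the corollary.
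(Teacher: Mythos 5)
Your proposal is correct and follows the same route as the paper: the upper bound is exactly the specialization of the odd branch of Theorem~\ref{thm:main} at $k=d-1$ (where $|B_{d-1}^{(d)}|=2^{d-1}$), and your construction $(\{0,1\}^{d-1}\times\{0\})\cup(\{*\}\times\{0,1\}^{d-2}\times\{1\})$ is, up to permuting coordinates and relabeling $0/1$, identical to the paper's $\{11,10,0*\}\square\{0,1\}^{d-2}$. The distance verification checks out in all three cases, so no gaps.
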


\begin{proof}
Theorem~\ref{thm:main} implies that $n(d-1,d)\leq 3\cdot 2^{d-2}$. On the other hand, by considering $U= \{11,10, 0*\} \square \{0,1\}^{d-2}$, we have $n(d-1,d)\geq 3\cdot 2^{d-2}$.
\end{proof}

\subsection{Comparison between Theorem~\ref{thm:main} and known results}\label{sec:comparison}

In this section, we compare Theorem~\ref{thm:main} with Theorem~\ref{thm:Huang2012} and Theorem~\ref{thm:previous}.

We first compare our new bound with Theorem~\ref{thm:previous}. 
Let $i$ be a positive integer such that $k \leqslant k+2 i-2 \leqslant d-1$. Note that 
$$
\sum_{j=0}^{\lceil(k+2i-2) / 2\rceil}\binom{d}{j}
\geq \big|B_{k+2i-2}^{(d)}\big|
>\sum\limits_{j=0}^{i-1}\bigg(\frac{1}{2^{j+1}}-\frac{1}{2^{d-k-j}}\bigg) \big|B_{k+2i-2}^{(d)}\big|
\geq \sum\limits_{j=0}^{i-1}\bigg(\frac{1}{2^{j+1}}-\frac{1}{2^{d-k-j}}\bigg) \big|B_{k+2j}^{(d)}\big|.
$$
It follows that
$$
\sum_{j=0}^{\lceil(k+2i-2) / 2\rceil}\binom{d}{j}+2^{d-i}>\sum\limits_{j=0}^{i-1}\bigg(\frac{1}{2^{j+1}}-\frac{1}{2^{d-k-j}}\bigg) \big|B_{k+2j}^{(d)}\big|+2^{d-i-1}+2^{k+i-1}=g(i-1)
$$
when $i\leq \frac{d-k}{2}$. When $i=\frac{d-k+1}{2}$, a similar argument shows that the left-hand side of the above inequality is strictly greater than $g(i)$. Thus, in view of the proof of Theorem~\ref{thm:main}, we see that Theorem~\ref{thm:main} always improves Theorem~\ref{thm:previous}.

\medskip

We also compare our upper bound with that in Theorem~\ref{thm:Huang2012}.
For ease, here we only consider the case that $d$ and $k$ are both sufficiently large even numbers, and the computations for the other cases are similar. Moreover, when $d\rightarrow\infty$ and $k>0.23d$, the upper bound in Theorem~\ref{thm:Huang2012} is worse than the trivial bound $2^d$. Thus it suffices to show that our upper bounds are better than that in Theorem~\ref{thm:Huang2012} when $d\rightarrow\infty$ and $0.123d\le k\le 0.23d$.

Theorem~\ref{thm:main} states that 
\begin{equation*}
     n(k,d) \leq \sum\limits_{j=0}^{\frac{d-k-2}{2}}\bigg(\frac{1}{2^{j+1}}-\frac{1}{2^{d-k-j}}\bigg) \big|B_{k+2j}^{(d)}\big|+2^{\frac{d+k}{2}}.
\end{equation*}
As $\big|B_{t}^{(d)}\big|=\sum\limits_{i=0}^{t/2}\binom{d}{i}$ for even number $t$, we have
\begin{equation*}
    n(k,d)\leq \sum_{t=0}^{\frac{k}{2}}\binom{d}{t}+\frac{1}{2}\binom{d}{\frac{k}{2}+1}+\frac{1}{4}\binom{d}{\frac{k}{2}+2}+\cdots +\frac{1}{2^{\frac{d-k-2}{2}}}\binom{d}{\frac{k}{2}+\frac{d-k-2}{2}}+2^{\frac{d+k}{2}}.
\end{equation*}
From the binomial theorem, it is easy to verify that $n(k,d)\leq 2^{\frac{k}{2}+1}\cdot \big(\frac{3}{2}\big)^d$.

We further consider the upper bound in Theorem~\ref{thm:Huang2012}. Consider the function 
$$h(x)=\frac{2^{x/2}}{x^x (1-x)^{1-x} \cdot \frac{3}{2}}.$$
One can see that $h(x)>1.01$ when $0.123 \leq x \leq 0.23$. Therefore, when $d \to \infty$ and $0.123d \leq k \leq 0.23d$, using Stirling approximation, we have
\begin{equation}\label{equ:C2}
    \frac{2^{k-1} \binom{d}{k}}{k \cdot 2^{\frac{k}{2}+1}\cdot (\frac{3}{2})^d} \sim \frac{2^{\frac{k}{2}-2}}{k} \sqrt{\frac{d}{2\pi k(d-k)}}\frac{d^d}{k^k (d-k)^{d-k}(\frac{3}{2})^d} \geq \frac{C}{d\sqrt{d}} \bigg(h\bigg(\frac{k}{d}\bigg)\bigg)^d> \frac{C}{d\sqrt{d}} 1.01^d,
\end{equation}
where $C$ is an absolute constant. 
We conclude that our bound improves the bound in Theorem~\ref{thm:Huang2012} exponentially when $d\rightarrow\infty$ and $0.123d \leq k \leq 0.23d$.

\subsection{Proof of Theorem~\ref{thm:main2}}\label{subsection:272829}

\begin{proof}[Proof of Theorem~\ref{thm:main2}]
Based on the proof of Theorem~\ref{thm:main}, we further consider two cases. If $V^{(0)}$ is empty, then the upper bound on $|U|$ follows from the proof of Theorem~\ref{thm:main} by ignoring the contribution of $V^{(0)}$.

Next, we assume that $V^{(0)}$ is not empty, that is, there is some vector $\boldsymbol{v}^{(0)}\in U \cap \{0,1\}^d$. Without loss of generality, we may assume that $\boldsymbol{v}^{(0)}=\boldsymbol{0}$. Let $H_{k}^{(d)}(\boldsymbol{0})$ be the $d$-dimensional Hamming ball in $\{0,1\}^d$ with radius $k$ centered at $\boldsymbol{0}$. Then for any vector $\boldsymbol{u}=(u_1, u_2, \ldots, u_d)\in U$, there is a vector $\boldsymbol{w}\in H_{k}^{(d)}(\boldsymbol{0})$ such that $\boldsymbol{u}$ covers $\boldsymbol{w}$. Indeed, we can take $\boldsymbol{w}=(w_1, w_2, \ldots, w_d) \in \{0,1\}^{d}$ with $w_i=u_i$ for $u_i \in \{0,1\}$ and $w_i=0$ for $u_i=*$, so that $d_{H}(\boldsymbol{0}, \boldsymbol{w})=d_{H}(\boldsymbol{0},\boldsymbol{u}) \leq k$. Therefore, by Claim~\ref{3.1}, we have
\begin{equation*}
    |U|\le |H_{k}^{(d)}(\boldsymbol{0})|=\sum_{j=0}^{k}\binom{d}{j}.
\end{equation*}
By taking the maximum of the upper bounds from the two cases, we get the desired upper bound.
\end{proof}

\begin{exam}\label{example:272727}
Using Theorem~\ref{thm:main}, we obtain that $n(2,7) \le 33$. Next, we show Theorem~\ref{thm:main2} implies a better upper bound $n(2,7) \le 29$. Suppose $V^{(0)}$ is not empty, then 
\begin{equation*}
    |U|\le |H_{2}^{(7)}(\boldsymbol{0})|=\sum_{j=0}^{2}\binom{7}{j}=29.
\end{equation*}
Therefore we can assume that $V^{(0)}=\emptyset$. By setting $i=1$ in inequality~\eqref{equ:CollectLikeTerms}, we have
\begin{equation*}
    |U|\le (\frac{1}{4}-\frac{1}{16})\cdot|V^{(1)}|+2^4+2^3=24+\frac{3}{16}\cdot|V^{(1)}|.
\end{equation*}
By Claim~\ref{claim:UpperBoundSum}, we have $|V^{(1)}| \leq 29$. Thus, $|U|\le 24+\frac{3}{16}\cdot 29=29.4375$. 
\end{exam}

\begin{rmk}\label{d_sufficiently_large}
For each fixed $k$, it is easy to see that Theorem~\ref{thm:main2} improves Theorem~\ref{thm:main} when $d$ is sufficiently large. In particular, when $k=2,3$, the optimal upper bound given in Theorem~\ref{thm:main2} is better than the optimal upper bound given in Theorem~\ref{thm:main} provided $d \geq 7$ and $d \geq 12$, respectively. For $k,d \leq 20$, We refer to Table~\ref{tab:values111} for an extensive list of the optimal upper bound on $n(k,d)$ given by Theorem~\ref{thm:main2} and Theorem~\ref{thm:main}.
\end{rmk}

We end the section by introducing a new parameter $h$ and giving a different upper bound on $n(k,d)$. This pushes all of our ideas in this paper to the limit.

\begin{theorem}\label{thm:main3}
    Let $1\leqslant k\leqslant d-1$ be integers. 
    If $d-k$ is even, then
\begin{equation}\label{eq11}
    n(k,d)\le \min\limits_{0\le h\le \frac{d-k-2}{2}} \max \left\{2^h\sum_{j=0}^k \binom{d-h}{j}, \sum\limits_{j=h+1}^{\frac{d-k-2}{2}}\bigg(\frac{1}{2^{j+1}}-\frac{1}{2^{d-k-j}}\bigg) \big|B_{k+2j}^{(d)}\big|+2^{\frac{d+k}{2}}\right\}.
\end{equation}
If $d-k$ is odd, then
  \begin{equation*}
    n(k,d)\le \min\limits_{0\le h\le \frac{d-k-3}{2}} \max \left\{2^h\sum_{j=0}^k \binom{d-h}{j}, \sum\limits_{j=h+1}^{\frac{d-k-3}{2}}\bigg(\frac{1}{2^{j+1}}-\frac{1}{2^{d-k-j}}\bigg) \big|B_{k+2j}^{(d)}\big|+\frac{1}{2^{\frac{d-k+1}{2}}}\big|B_{d-1}^{(d)}\big|+2^{\frac{d+k-1}{2}}\right\},
\end{equation*}
and also 
$$
  n(k,d)\le \max \left\{2^{\frac{d-k-1}{2}}\sum_{j=0}^k \binom{\frac{d+k+1}{2}}{j}, 2^{\frac{d+k-1}{2}}\right\}.
$$    
\end{theorem}

\begin{proof}
As in the proof of Theorem~\ref{thm:main2}, we can further consider whether $\bigcup_{j=0}^{h} V^{(j)}=\emptyset$ for $h>0$ and give a different upper bound on $n(k,d)$. We only prove the case that $d-k$ is even, since the analysis for the case that $d-k$ is odd is similar. Let $0\le h\le \frac{d-k-2}{2}$. 

If $\bigcup_{j=0}^{h} V^{(j)}=\emptyset$, then the upper bound on $|U|$ follows from the proof of Theorem~\ref{thm:main} by ignoring the contribution of $\bigcup_{j=0}^{h} V^{(j)}$. Thus, the second upper bound in inequality~\eqref{eq11} holds.

Next we assume $\bigcup_{j=0}^{h} V^{(j)}\neq \emptyset$. Without loss of generality we can assume that 
there is some vector $\boldsymbol{v}^{(0)}\in U$, where $\boldsymbol{v}^{(0)}=(v_1,v_2,\ldots, v_d)$ with $v_i=*$ for $i \leq t$ and $v_i=0$ for $i>t$, where $t \leq h$.  Then for any vector $\boldsymbol{u}=(u_1, u_2, \ldots, u_d)\in U$, there is a vector $\boldsymbol{w}\in\{0,1\}^h \square H_{k}^{(d-h)}(\boldsymbol{0})$ such that $\boldsymbol{u}$ covers $\boldsymbol{w}$. Indeed, since $d_{H}(\boldsymbol{v}^{(0)},\boldsymbol{u}) \leq k$, we can take $\boldsymbol{w}=(w_1, w_2, \ldots, w_d) \in\{0,1\}^h \square H_{k}^{(d-h)}(\boldsymbol{0})$ with $w_i=u_i$ for $u_i \in \{0,1\}$ and $w_i=0$ for $u_i=*$, so that $d_{H}(\boldsymbol{v}^{(0)}, \boldsymbol{w})=d_{H}(\boldsymbol{v}^{(0)},\boldsymbol{u}) \leq k$. Therefore, by Claim~\ref{3.1}, we have an injective map from $U$ to $\{0,1\}^h \square H_{k}^{(d-h)}(\boldsymbol{0})$ and thus $|U| \leq 2^h\sum_{j=0}^k \binom{d-h}{j}$, obtaining the first upper bound in inequality~\eqref{eq11}.  
\end{proof}
Note that the first bound of inequality~\eqref{eq11} is increasing in $h$, and the second bound of inequality~\eqref{eq11} is decreasing in $h$, so there is an optimal choice of $h$ when $k,d$ are fixed. In particular, by taking $h=1$, inequality~\eqref{eq11} implies that $n(2,10) \le 95$.

\section{Some exact values of $n(k,d)$: Proof of Theorem~\ref{thm:ShortTight}}\label{sec:values}

In this section, we prove Theorem~\ref{thm:ShortTight}. While it is tempting to use a unified approach to determine $n(5,7)$, $n(6,8)$, and $n(3,6)$, there are some subtle differences in the analysis of these three cases. A key observation we will use is that, for any pair of vectors $\boldsymbol{v}_{1},\boldsymbol{v}_{2}\in V^{(0)}$, we have $d_{H}(\boldsymbol{v}_{1},\boldsymbol{v}_{2})=d_{H}(\overline{\boldsymbol{v}_{1}},\overline{\boldsymbol{v}_{2}})$, which implies that, if $V^{(0)}$ is contained in some Hamming ball with radius $t$, so is $\overline{V^{(0)}}$. This observation, together with Theorem~\ref{thm:StabilityFrankl}, namely the stability version of Kleitman's Theorem, allows us to deduce that if $|U|$ is larger than some certain value, then $\overline{V^{(0)}}$ must be contained in some extremal configuration, which eventually leads to a contradiction.

We follow the notations in Section~\ref{sec:General} and let $U$ be an extremal configuration with $|U|=n(k,d)$.

\subsection{Exact value of $n(5,7)$}\label{sec:n(5,7)}
The goal of this section is to show $n(5,7)=74$. 
Recall that Theorem~\ref{thm:main} shows that $\sum_{\boldsymbol{v}\in\{0,1\}^{7}}f(\boldsymbol{v})\le 75$. 
By Corollary~\ref{cor:ToMain}, it suffices to show $n(5,7)\neq 75$. For the sake of contradiction, assume that $n(5,7)=75$. This forces all estimates in the proof of Theorem~\ref{thm:main} to be tight. In particular, in Claim~\ref{claim:UpperBoundSum} we must have $|V^{(0)}|=|\overline{V^{(0)}}|=44$. Moreover, by  Claim~\ref{claim:overline} and Claim~\ref{claim:counting}, $f(\boldsymbol{v})=1$ if $\boldsymbol{v}\in V^{(0)}$, $f(\boldsymbol{v})=\frac{1}{4}$ if $\boldsymbol{v}\in \overline{V^{(0)}}$ and $f(\boldsymbol{v})=\frac{1}{2}$ if $\boldsymbol{v}\in \{0,1\}^{7}\setminus(V^{(0)}\cup\overline{V^{(0)}})$. Therefore, there are exactly $11$ vectors in $U$ with $2$ jokers, and they exactly cover the $44$ vectors in $\overline{V^{(0)}}$.

On the other hand, note that $|V^{(0)}|=44$ is of maximum possible size. In view of Theorem~\ref{thm:StabilityFrankl} and the proof of Claim~\ref{claim:UpperBoundSum}, it follows that $V^{(0)}$ is precisely the Cartesian product of $\{0,1\}$ and a $6$-dimensional Hamming ball of radius $2$, and so is $\overline{V^{(0)}}$. Without loss of generality, assume that such a $6$-dimensional Hamming ball of radius $2$ is centered at the origin $\boldsymbol{0}\in\{0,1\}^{6}$, that is, assume $\overline{V^{(0)}}:=\{0,1\}\square \binom{[6]}{\le 2}$. Note that $\overline{V^{(0)}}$ are exactly covered by the $11$ vectors in $U$ with $2$ jokers.

\begin{claim}\label{claim:57575757}
    Any pair of vectors in $\{1\}\square\binom{[6]}{2}$ cannot be covered by the same vector in $\{0,1,*\}^{7}$ with $2$ jokers.
\end{claim}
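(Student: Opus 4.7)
The plan is to argue by contradiction, exploiting the rigidity already forced by the hypothesis $n(5,7) = 75$. Before touching the claim, I would record the following key preliminary observation: every one of the four vectors covered by a $2$-joker vector $\boldsymbol{u} \in U$ must lie in $\overline{V^{(0)}} = \{0,1\}\square\binom{[6]}{\le 2}$. Indeed, any $\boldsymbol{v} \in V^{(0)}$ has $f(\boldsymbol{v}) = 1$ and any $\boldsymbol{v} \in \{0,1\}^{7}\setminus(V^{(0)}\cup\overline{V^{(0)}})$ has $f(\boldsymbol{v}) = 1/2$, so by Claim~\ref{3.1} such a vector cannot also be covered by $\boldsymbol{u}$. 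Hence the four vectors covered by $\boldsymbol{u}$ all have $f$-value $1/4$, and therefore belong to $\overline{V^{(0)}}$.

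Now suppose towards contradiction that some $2$-joker vector $\boldsymbol{u} \in U$ covers two distinct vectors $\boldsymbol{w}_{1},\boldsymbol{w}_{2} \in \{1\}\square\binom{[6]}{2}$. Since $\boldsymbol{w}_{1}$ and $\boldsymbol{w}_{2}$ must agree with $\boldsymbol{u}$ on every non-joker coordinate, they differ only on the two joker positions of $\boldsymbol{u}$. I would then split into two cases according to whether position $1$ is a joker of $\boldsymbol{u}$.

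In the first case, position $1$ is not a joker, so $u_{1}=1$ and both jokers sit in $\{2,\dots,7\}$. Let $s$ be the number of $1$'s of $\boldsymbol{u}$ on the four non-joker positions within $\{2,\dots,7\}$. As the pair of jokers ranges over $\{0,1\}^{2}$, the numbers of $1$'s in positions $2$-$7$ across the four covered vectors form the multiset $\{s,s+1,s+1,s+2\}$. For two of the covered vectors to land in $\{1\}\square\binom{[6]}{2}$, at least two entries of this multiset must equal $2$, which forces $s=1$; but then the fourth covered vector has $s+2=3$ ones in positions $2$-$7$ and therefore lies outside $\overline{V^{(0)}}$, contradicting the preliminary observation. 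In the second case, position $1$ is a joker and the other joker is some $i \in \{2,\dots,7\}$. Since $\boldsymbol{w}_{1}$ and $\boldsymbol{w}_{2}$ both have first coordinate $1$, they agree on position $1$ and can differ only on position $i$, so their Hamming weights differ by exactly $1$; this contradicts the fact that both have weight $3$.

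I expect the main obstacle to be recording the preliminary observation cleanly: it is exactly where the tightness of $n(5,7)=75$, together with the extremal structure of $\overline{V^{(0)}}$ given by Theorem~\ref{thm:StabilityFrankl}, enters the proof. Once that rigidity is in hand, the two-case split is a short weight count and presents no further difficulty.
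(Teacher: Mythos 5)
Your proof is correct and rests on the same key fact as the paper's, namely that the $2$-joker vectors of $U$ cover only vectors of $\overline{V^{(0)}}=\{0,1\}\square\binom{[6]}{\le 2}$, from which a contradiction is obtained by exhibiting a covered vector of Hamming weight at least $4$. The paper reaches this contradiction without your two-case analysis by observing that if $\boldsymbol{u}$ covers both $\boldsymbol{v}$ and $\boldsymbol{v}'$ then it also covers $\boldsymbol{v}\lor\boldsymbol{v}'$, which has weight at least $4$ since the two $2$-subsets are distinct; this is exactly your ``fourth covered vector,'' handled uniformly.
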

\begin{poc}
    Suppose that there is a vector $\boldsymbol{u}\in\{0,1,*\}^{7}$ covering two different vectors $\boldsymbol{v},\boldsymbol{v}'\in\{1\}\square\binom{[6]}{2}$, then $\boldsymbol{u}$ has $2$ jokers and $\boldsymbol{u}$ covers $\boldsymbol{v}\lor\boldsymbol{v}'$. Note that the number of $1$-coordinates of $\boldsymbol{v}\lor\boldsymbol{v}'$ is at least $4$, thus $\boldsymbol{v}\lor\boldsymbol{v}'\notin \overline{V^{(0)}}$. However, recall that the vectors in $U$ with $2$ jokers do not cover any vector outside $\overline{V^{(0)}}$, a contradiction. 
\end{poc}
Therefore, each vector in $\{1\}\square\binom{[6]}{2}$ is covered by a unique vector in $U$ with $2$ jokers. However, $\binom{6}{2}=15>11$, a contradiction.

\subsection{Exact value of $n(6,8)$}
The goal of this section is to show $n(6,8)=150$. It suffices to show $n(6,8)\ne 151$ by Corollary \ref{cor:ToMain}. For the sake of contradiction, assume that $|U|=151$. Note that inequality~(\ref{equ:CollectLikeTerms}) in the proof of Theorem~\ref{thm:main} gives 
\begin{equation*}
    |U|\le 2^{6}+2^{6}+\bigg(\frac{1}{2}-\frac{1}{4}\bigg)\cdot|V^{(0)}|.
\end{equation*}
It follows that $|V^{(0)}|\geq 92$. Since Claim~\ref{claim:UpperBoundSum} implies that 
$|V^{(0)}|\le\sum_{j=0}^{3}\binom{8}{j}=93$, we must have $|V^{(0)}|=93$ or $|V^{(0)}|=92$. As in the proof in Section~\ref{sec:n(5,7)}, by Theorem~\ref{thm:StabilityFrankl}, $\overline{V^{(0)}}$ is contained in some $8$-dimensional Hamming balls of radius $3$, for otherwise $|V^{(0)}|\le 93-\binom{4}{3}+1=90$. Without loss of generality, assume that $\overline{V^{(0)}}\subseteq\binom{[8]}{\le 3}$. First we give the following analogous claim, which can be derived the same way as Claim~\ref{claim:57575757}, we omit the proof here.

\begin{claim}\label{claim:68686868}
If the vectors in $U$ with $2$ jokers do not cover any vector outside $\overline{V^{(0)}}$, then any pair of vectors in $\binom{[8]}{3}$ cannot be covered by same vector in $\{0,1,*\}^{8}$ with $2$ jokers.
\end{claim}

Then we consider the following cases.

\begin{Case}
\item Suppose that $|V^{(0)}|=92$. As in the proof in Section~\ref{sec:n(5,7)}, we must have $f(\boldsymbol{v})=1$ if $\boldsymbol{v}\in V^{(0)}$, $f(\boldsymbol{v})=\frac{1}{4}$ if $\boldsymbol{v}\in \overline{V^{(0)}}$ and $f(\boldsymbol{v})=\frac{1}{2}$ if $\boldsymbol{v}\in \{0,1\}^{8}\setminus(V^{(0)}\cup\overline{V^{(0)}})$. Therefore, there are exactly $23$ vectors in $U$ with $2$ jokers, and they exactly cover the $92$ vectors in $\overline{V^{(0)}}$. However, $\binom{8}{3}-1=55>23$, a contradiction to Claim~\ref{claim:68686868}.

\item Suppose that $|V^{(0)}|=93$. In this case, we have
$$
151=|U|=\sum_{\boldsymbol{v} \in \{0,1\}^8 } f(\boldsymbol{v})=93+\sum_{\boldsymbol{v}\in \overline{V^{(0)}}} f(\boldsymbol{v})+ \sum_{ \boldsymbol{v}\in \{0,1\}^8 \setminus (V^{(0)} \cup \overline{V^{(0)}})} f(\boldsymbol{v}) \leq 151+\frac{1}{4},
$$
where $f(\boldsymbol{v})\le \frac{1}{4}$ for each $\boldsymbol{v}\in \overline{V^{(0)}}$, and $f(\boldsymbol{v})\leq \frac{1}{2}$ for each $\boldsymbol{v}\in \{0,1\}^{8}\setminus(V^{(0)}\cup\overline{V^{(0)}})$. 
However, note that if $f(\boldsymbol{v})= \frac{1}{4}$ for all $\boldsymbol{v}\in \overline{V^{(0)}}$ and $f(\boldsymbol{v})= \frac{1}{2}$ for all $\boldsymbol{v}\in \{0,1\}^{8}\setminus(V^{(0)}\cup\overline{V^{(0)}})$, then the right-hand side of the above inequality holds and thus $|U|=151+\frac{1}{4}$, which is absurd. This observation leads to the following three different cases.

\begin{Case}
\item Suppose that there is a vector $\boldsymbol{v}\in \overline{V^{(0)}}$ such that $f(\boldsymbol{v})=0$. Then the above analysis shows that the remaining $92$ vectors in  $\overline{V^{(0)}}$ must have weight $\frac{1}{4}$, and $f(\boldsymbol{v})= \frac{1}{2}$ for all $\boldsymbol{v}\in \{0,1\}^{8}\setminus(V^{(0)}\cup\overline{V^{(0)}})$. Thus, 
there are exactly $23$ vectors in $U$ with $2$ jokers, and they exactly cover the $92$ vectors in $\overline{V^{(0)}}$ with nonzero weight. However, this is impossible by Claim~\ref{claim:68686868}.

\item Suppose that there is at least one vector in $\overline{V^{(0)}}$ satisfying $0<f(\boldsymbol{v})<\frac{1}{4}$, then there are at least $8$ vectors in $\{0,1\}^8$ that are covered by some vector in $U$ with at least $3$ jokers, which implies that
\begin{equation*}
    |U| \leq 151+\frac{1}{4}- 8 \bigg(\frac{1}{4}-\frac{1}{8}\bigg)<151.
\end{equation*}

\item Suppose that all of these $93$ vectors in  $\overline{V^{(0)}}$ are equipped with weight $\frac{1}{4}$. Since $4$ is not divided by $93$, there exist at least 3 vectors not in $\overline{V^{(0)}}$ with weight $\frac{1}{4}$. It follows that
\begin{equation*}
|U|\leq 151+\frac{1}{4}- 3\bigg(\frac{1}{2}-\frac{1}{4}\bigg)<151.
\end{equation*}  
\end{Case}
\end{Case}

\subsection{Exact value of $n(3,6)$}
The goal of this section is to show $n(3,6)=27$. It suffices to show $n(3,6)\neq 28$. For the sake of contradiction, assume that $|U|=28$. From inequality~(\ref{equ:CollectLikeTerms}) with $i=0$, we have 
\begin{equation*}
    |U|\le 2^{4}+2^{3}+\bigg(\frac{1}{2}-\frac{1}{8}\bigg)|V^{(0)}|.
\end{equation*}
It follows that $|V^{(0)}|\ge 11$. On the other hand, $|V^{(0)}|\le2\sum_{j=0}^{1}\binom{5}{j}=12$. Therefore, $|V^{(0)}|=12$ or $|V^{(0)}|=11$. By Theorem~\ref{thm:StabilityFrankl}, without loss of generality, we can further assume that $\overline{V^{(0)}} \subseteq \{0,1\}\square \binom{[5]}{\le 1}$.

Let $\delta=\frac{1}{2}$ if $|V^{(0)}|=12$, and  $\delta=\frac{1}{8}$ if $|V^{(0)}|=11$.
Note that we have
$$
|U|=\sum_{\boldsymbol{v} \in \{0,1\}^6 } f(\boldsymbol{v})=|V^{(0)}|+\sum_{\boldsymbol{v}\in \overline{V^{(0)}}} f(\boldsymbol{v})+ \sum_{ \boldsymbol{v}\in \{0,1\}^6 \setminus (V^{(0)} \cup\overline{V^{(0)}})} f(\boldsymbol{v}) \leq 28+\delta,
$$
where $f(\boldsymbol{v})\le \frac{1}{8}$ for $\boldsymbol{v}\in \overline{V^{(0)}}$, and $f(\boldsymbol{v})+f(\overline{\boldsymbol{v}})\leq \frac{1}{2}+\frac{1}{4}=\frac{3}{4}$ if $\boldsymbol{v}\in \{0,1\}^{6}\setminus(V^{(0)}\cup\overline{V^{(0)}})$. However, note that if $f(\boldsymbol{v})= \frac{1}{8}$ for all $\boldsymbol{v}\in \overline{V^{(0)}}$ and $f(\boldsymbol{v})+f(\overline{\boldsymbol{v}})= \frac{3}{4}$ for all $\boldsymbol{v}\in \{0,1\}^{6}\setminus(V^{(0)}\cup\overline{V^{(0)}})$, then the right-hand side of the above inequality holds and thus $|U|=28+\delta$, which is absurd. 

Suppose there is some vector $\boldsymbol{v}\in\overline{V^{(0)}}$ that is covered by a vector $\boldsymbol{u}\in U$ with at least $4$ jokers, then we have
$$
|U|\leq 28+\frac{1}{2}-16\bigg(\frac{1}{8}-\frac{1}{16}\bigg)<28,
$$
which is impossible. Thus, each vector $\boldsymbol{v}\in \overline{V^{(0)}}$ has either $f(\boldsymbol{v})= \frac{1}{8}$ or $f(\boldsymbol{v})=0$. Moreover, there are at most $4$ vectors $\boldsymbol{v}\in \overline{V^{(0)}}$ with $f(\boldsymbol{v})=0$, for otherwise
$$
|U|\leq 28+\frac{1}{2}-5 \cdot \frac{1}{8}<28.
$$
Therefore, $U$ has at least $1$ vector with $3$ jokers. 
\begin{Case}
\item Suppose that $U$ has only one vector $\boldsymbol{u}$ with $3$ jokers. Then $\boldsymbol{u}$ needs to cover at least $11-4=7$ vectors in  $\overline{V^{(0)}}$. However, it is easy to verify that $\boldsymbol{u}$ can cover at most $6$ vectors in $\{0,1\}\square \binom{[5]}{\le 1}$. 

\item Suppose that $U$ has at least $2$ vectors with $3$ jokers. 
\begin{Case}
    \item If $|V^{(0)}|=11$, then at least $5$ vectors in $\{0,1\}^{6}\setminus(V^{(0)}\cup\overline{V^{(0)}})$ have weight $\frac{1}{8}$ and thus
$$
|U|\leq 28+\frac{1}{8}-5 \cdot \bigg(\frac{1}{4}-\frac{1}{8}\bigg)<28.
$$
\item If $|V^{(0)}|=12$, then at least $4$ vectors in $\{0,1\}^{6}\setminus(V^{(0)}\cup\overline{V^{(0)}})$ have weight $\frac{1}{8}$ and thus
$$
|U|\leq 28+\frac{1}{2}-4 \cdot \bigg(\frac{1}{4}-\frac{1}{8}\bigg)=28.
$$
The assumption that $|U|=28$ then indicates that there are exactly $2$ vectors in $U$ with $3$ jokers, and they cover all $12$ vectors in $\overline{V^{(0)}}$. Note that any vector $\boldsymbol{u}\in \{0,1,*\}^6$  covers at most $6$ vectors in $\overline{V^{(0)}}=\{0,1\}\square\binom{[5]}{\le 1}$, and $\boldsymbol{u}$ covers $6$ vectors only if $\boldsymbol{u}\in \{0,1,*\}^6$ is of the form $\{*\}\square \boldsymbol{s}$, where $s$ contains $2$ jokers. However, it is easy to verify that any two vectors of such a form cannot cover all 12 vectors in $\{0,1\}\square\binom{[5]}{\le 1}$.
\end{Case}
\end{Case}

\section{Concluding remarks}\label{sec:Conclusion}
In this paper, we mainly focus on improving the upper bounds of the function $n(k,d)$ when $k$ is relatively close to $d$. The general improvement relies on a weighted count of all vectors in $\{0,1\}^{d}$. Furthermore, the authors in~\cite{2022neighbor} provided several better lower bounds on $n(k,d)$ via Gurobi (see Table~\ref{tab:values}) and they expected that all those solutions to the corresponding MIP problems should be optimal. Fortunately, we confirm that several of them are indeed optimal and the stability result of Kleitman's theorem is useful in the proofs. One can further improve the upper bounds of $n(k,d)$ via a similar argument in our proofs. However, in order to fully determine the values, even the asymptotic results of $n(k,d)$, new ideas are needed.

When $k>1$, the best-known general upper bound was obtained with tools of linear algebra~\cite{huang2012counterexample} until Alon, Grytczuk, Kisielewicz, and Przes\l awski~\cite{2022neighbor} improved the upper bound when $k$ is relatively close to $d$ without tools of linear algebra. Inspired by their results, we improve the upper bound in~\cite{huang2012counterexample} when $k=2$ and $d\le 10$ using purely combinatorial analysis, which yields that linear algebra methods do not provide tight bound on $n(k,d)$ even for $k=2$. Hence, improving the upper bound on $n(k,d)$ remains a challenging task in general.

\section*{Acknowledgement}
Zixiang Xu would like to thank Hong Liu and Jun Gao for introducing this problem, and thank Jarosław Grytczuk, Andrzej P. Kisielewicz, and Krzysztof Przes\l awski for sharing one of their constructions about $n(4,6)\ge 37$. The authors would like to express our gratitude to the anonymous reviewers for the detailed and constructive comments which are very helpful to the improvement of the technical presentation of this paper.

\medskip
\textbf{Note added} The first version of~\cite{2022neighbor} appeared on 9 Dec 2022. Our first draft was submitted to arXiv on 16 Jan 2023. Later, we were informed by the authors in~\cite{2022neighbor} that a new version (v3) of~\cite{2022neighbor} appeared in arXiv on 2 Feb 2023 and some previous bounds were further improved. Thus, we updated our abstract and listed their new results in our submitted version on March 5.

Recently, we found in the Appendix of~\cite{2022neighbor}, the authors stated that the exact values of $n(2,5)=12$, $n(2,6)=16$ and $n(2,7)=21$ were independently proved by Łuba~\cite{2023n(5.2)}.

\bibliographystyle{abbrv}
\bibliography{NeighborlyBox}

\appendix

\section{Appendix: Computations of improved upper bounds on $n(k,d)$}

In the following table, we list all $k,d \leq 20$ with $d-k \geq 2$ such that our new upper bound improves the best-known upper bound with the help of a simple C++ code. 

The third column records the best-known lower bound on $n(k,d)$, where we take the minimum among \cite[Theorem 2]{1997Alon}, \cite[Theorem 2]{2022neighbor}, \cite[Proposition 2]{2022neighbor}, and the computation by Gurobi in \cite[Table 2]{2022neighbor}). Note that we take the recursive lower bound $n(k_1+k_2,d_1+d_2) \geq n(k_1,d_1) n(k_2,d_2)$ proved in \cite[Proposition 2]{2022neighbor} into account, and our computations show that repeating using such a recursion leads to the best lower bound for most $k$ and $d$ in our table.

The fourth column lists the best-known upper bound (taking the maximum from \cite{huang2012counterexample} and \cite[Theorem 1]{2022neighbor}), and the last column lists the best upper bound from the results (Theorem~\ref{thm:main}, Theorem~\ref{thm:main2}, and Theorem~\ref{thm:main3}) in this paper. In particular, if the upper bound given in Theorem~\ref{thm:main3} is the smallest, we mark $*$ in the last column.

\begin{table}[ht]
    \centering
\begin{tabular}{|c|c|c|c|c|}
	\hline
$k$	& $d$ & Best-known lower bound & Best-known upper bound & Our new upper bound  \\
	\hline
2 & 4 & 9 & 13 & 9 \\   \hline
2 & 5 & 12 & 22 & 14 \\         \hline
2 & 6 & 16 & 37 & 21 \\         \hline
2 & 7 & 21 & 50 & 29 \\         \hline
2 & 8 & 27 & 65 & 45 \\         \hline
2 & 9 & 30 & 82 & 70 \\         \hline
2 & 10 & 36 & 101 & 95* \\         \hline
3 & 5 & 18 & 32 & 18 \\         \hline
3 & 6 & 27 & 54 & 27 \\         \hline
3 & 7 & 37 & 93 & 43 \\         \hline
3 & 8 & 48 & 157 & 66 \\        \hline
3 & 9 & 64 & 258 & 100 \\       \hline
3 & 10 & 84 & 432 & 151 \\      \hline
3 & 11 & 108 & 744 & 228 \\     \hline
3 & 12 & 135 & 1025 & 332 \\    \hline
3 & 13 & 162 & 1314 & 504 \\    \hline
3 & 14 & 189 & 1653 & 756* \\    \hline
3 & 15 & 216 & 2046 & 1097* \\   \hline
3 & 16 & 252 & 2497 & 1673* \\   \hline
3 & 17 & 294 & 3010 & 2365* \\   \hline
4 & 6 & 37 & 54 & 37 \\         \hline
4 & 7 & 54 & 93 & 58 \\         \hline
4 & 8 & 81 & 157 & 91 \\        \hline
4 & 9 & 111 & 258 & 141 \\      \hline
4 & 10 & 148 & 432 & 217 \\     \hline
4 & 11 & 192 & 744 & 332 \\     \hline
4 & 12 & 256 & 1306 & 504 \\    \hline
4 & 13 & 336 & 2117 & 762 \\    \hline
4 & 14 & 441 & 3519 & 1150 \\   \hline
4 & 15 & 567 & 6037 & 1733 \\   \hline
4 & 16 & 729 & 10709 & 2539 \\  \hline
4 & 17 & 810 & 17594 & 3844 \\  \hline
4 & 18 & 972 & 28069 & 5804 \\  \hline
4 & 19 & 1134 & 35246 & 8459* \\         \hline
4 & 20 & 1323 & 43721 & 12834* \\        \hline
5 & 7 & 74 & 128 & 74 \\        \hline
5 & 8 & 114 & 221 & 117 \\      \hline
5 & 9 & 162 & 384 & 183 \\      \hline
5 & 10 & 243 & 642 & 283 \\     \hline
5 & 11 & 333 & 1074 & 435 \\    \hline
5 & 12 & 444 & 1818 & 664 \\    \hline
5 & 13 & 592 & 3141 & 1008 \\   \hline
5 & 14 & 777 & 5521 & 1524 \\   \hline
5 & 15 & 1024 & 9040 & 2301 \\  \hline
5 & 16 & 1344 & 15077 & 3467 \\         \hline
5 & 17 & 1764 & 25786 & 5216 \\         \hline
5 & 18 & 2268 & 45384 & 7842 \\         \hline
5 & 19 & 2916 & 76564 & 11781 \\        \hline
\end{tabular}
\end{table}
\begin{table}[ht]
    \centering
\begin{tabular}{|c|c|c|c|c|}
	\hline
$k$	& $d$ & Best-known lower bound & Best-known upper bound & Our new upper bound  \\
	\hline
5 & 20 & 3645 & 125996 & 17690 \\       \hline
6 & 8 & 150 & 221 & 150 \\      \hline
6 & 9 & 228 & 384 & 240 \\      \hline
6 & 10 & 342 & 642 & 381 \\     \hline
6 & 11 & 486 & 1074 & 598 \\    \hline
6 & 12 & 729 & 1818 & 929 \\    \hline
6 & 13 & 999 & 3141 & 1433 \\   \hline
6 & 14 & 1369 & 5521 & 2195 \\  \hline
6 & 15 & 1776 & 9040 & 3346 \\  \hline
6 & 16 & 2368 & 15077 & 5079 \\         \hline
6 & 17 & 3108 & 25786 & 7688 \\         \hline
6 & 18 & 4096 & 45384 & 11609 \\        \hline
6 & 19 & 5376 & 76564 & 17499 \\        \hline
6 & 20 & 7056 & 125996 & 26345 \\       \hline
7 & 9 & 300 & 512 & 302 \\      \hline
7 & 10 & 456 & 894 & 481 \\     \hline
7 & 11 & 684 & 1536 & 762 \\    \hline
7 & 12 & 1026 & 2610 & 1196 \\   \hline
7 & 13 & 1458 & 4428 & 1859 \\  \hline
7 & 14 & 2187 & 7569 & 2866 \\  \hline
7 & 15 & 2997 & 13136 & 4391 \\         \hline
7 & 16 & 4107 & 23085 & 6692 \\         \hline
7 & 17 & 5476 & 38162 & 10159 \\        \hline
7 & 18 & 7104 & 63948 & 15376 \\        \hline
7 & 19 & 9472 & 109332 & 23218 \\       \hline
7 & 20 & 12432 & 191532 & 34999 \\      \hline
8 & 10 & 600 & 898 & 608 \\     \hline
8 & 11 & 912 & 1536 & 978 \\    \hline
8 & 12 & 1369 & 2610 & 1569 \\  \hline
8 & 13 & 2052 & 4428 & 2494 \\  \hline
8 & 14 & 3078 & 7569 & 3924 \\  \hline
8 & 15 & 4374 & 13136 & 6118 \\         \hline
8 & 16 & 6561 & 23085 & 9463 \\         \hline
8 & 17 & 8991 & 38162 & 14543 \\        \hline
8 & 18 & 12321 & 63948 & 22230 \\       \hline
8 & 19 & 16428 & 109332 & 33839 \\      \hline
8 & 20 & 21904 & 191532 & 51339 \\      \hline
9 & 11 & 1200 & 2048 & 1217 \\  \hline
9 & 12 & 1824 & 3534 & 1957 \\  \hline
9 & 13 & 2738 & 6144 & 3139 \\  \hline
9 & 14 & 4218 & 10572 & 4989 \\         \hline
9 & 15 & 6156 & 18141 & 7849 \\         \hline
9 & 16 & 9234 & 31277 & 12237 \\        \hline
9 & 17 & 13122 & 54546 & 18927 \\       \hline
9 & 18 & 19683 & 95772 & 29086 \\       \hline
9 & 19 & 26973 & 159720 & 44461 \\      \hline
9 & 20 & 36963 & 269052 & 67679 \\      \hline
10 & 12 & 2400 & 3634 & 2444 \\         \hline
\end{tabular}
\end{table}
\begin{table}[ht]
    \centering
\begin{tabular}{|c|c|c|c|c|}
	\hline
$k$	& $d$ & Best-known lower bound & Best-known upper bound & Our new upper bound  \\
	\hline
10 & 13 & 3648 & 6144 & 3964 \\         \hline
10 & 14 & 5550 & 10572 & 6424 \\        \hline
10 & 15 & 8436 & 18141 & 10326 \\       \hline
10 & 16 & 12996 & 31277 & 16430 \\      \hline
10 & 17 & 18468 & 54546 & 25884 \\      \hline
10 & 18 & 27702 & 95772 & 40421 \\      \hline
10 & 19 & 39366 & 159720 & 62648 \\     \hline
10 & 20 & 59049 & 269052 & 96485 \\     \hline
11 & 13 & 4800 & 8192 & 4889 \\         \hline
11 & 14 & 7296 & 14004 & 7929 \\        \hline
11 & 15 & 11100 & 24576 & 12849 \\      \hline
11 & 16 & 17100 & 42717 & 20653 \\      \hline
11 & 17 & 25992 & 73994 & 32861 \\      \hline
11 & 18 & 38988 & 128540 & 51769 \\     \hline
11 & 19 & 55404 & 225256 & 80843 \\     \hline
11 & 20 & 83106 & 395022 & 125296 \\    \hline
12 & 14 & 9600 & 14668 & 9811 \\        \hline
12 & 15 & 14592 & 24576 & 16018 \\      \hline
12 & 16 & 22500 & 42717 & 26191 \\      \hline
12 & 17 & 34200 & 73994 & 42514 \\      \hline
12 & 18 & 51984 & 128540 & 68325 \\     \hline
12 & 19 & 77976 & 225256 & 108697 \\    \hline
12 & 20 & 116964 & 395022 & 171313 \\   \hline
13 & 15 & 19200 & 32768 & 19622 \\      \hline
13 & 16 & 29184 & 55587 & 32037 \\      \hline
13 & 17 & 45000 & 98304 & 52382 \\      \hline
13 & 18 & 68400 & 172298 & 85028 \\     \hline
13 & 19 & 103968 & 300838 & 136651 \\    \hline
13 & 20 & 156066 & 526094 & 217395 \\   \hline
14 & 16 & 38400 & 59101 & 39351 \\      \hline
14 & 17 & 58368 & 98304 & 64611 \\      \hline
14 & 18 & 90000 & 172298 & 106445 \\    \hline
14 & 19 & 136800 & 300838 & 174283 \\   \hline
14 & 20 & 207936 & 526094 & 282639 \\   \hline
15 & 17 & 76800 & 131072 & 78702 \\     \hline
15 & 18 & 116736 & 220918 & 129223 \\   \hline
15 & 19 & 180000 & 393216 & 212891 \\   \hline
15 & 20 & 273600 & 694054 & 348567 \\   \hline
16 & 18 & 153600 & 237834 & 157762 \\   \hline
16 & 19 & 233472 & 393216 & 260270 \\   \hline
16 & 20 & 360000 & 694054 & 431610 \\   \hline
17 & 19 & 307200 & 524288 & 315525 \\   \hline
17 & 20 & 466944 & 878810 & 520540 \\   \hline
18 & 20 & 614400 & 956198 & 632265 \\   \hline
\end{tabular}
\caption{Improved upper bounds on $n(k,d)$ for $k,d \leq 20$ }
    \label{tab:values111}

\end{table}

\end{document}